\theoremstyle{plain} 
\newtheorem{theorem}{\indent\sc Theorem}[section]
\newtheorem{lemma}[theorem]{\indent\sc Lemma}
\newtheorem{claim}[theorem]{\indent\sc Claim}
\theoremstyle{definition} 
\newtheorem{remark}[theorem]{\indent\sc Remark}
\def\Pi{{\mathbf{P}}}
\begin{document}

\title[Spanning trees in a Claw-free graph]{Spanning trees in a Claw-free graph whose stems have at most $k$ branch vertices} 

\author[P.~H.~Ha]{Pham Hoang Ha} 



\keywords{spanning tree, stem, branch vertex, claw-free graph.}

\subjclass[2010]{ 
Primary 05C05, 05C70. Secondary 05C07, 05C69.
}

\address{
Department of Mathematics, Hanoi National University of Education, 136, XuanThuy str., Hanoi, Vietnam
}
\email{ha.ph@hnue.edu.vn}

\maketitle

\begin{abstract}
Let $T$ be a tree, a vertex of degree one and a vertex of degree at least
three is called a leaf and a branch vertex, respectively. The set of leaves of $T$
is denoted by $Leaf(T)$. The subtree $T-Leaf(T)$ of $T$ is called the stem of
$T$ and denoted by $Stem(T).$ In this paper, we give two sufficient conditions
for a connected claw-free graph to have a spanning tree whose stem has a bounded
number of branch vertices, and those conditions are best possible. As corollaries of main results we also give some conditions to show that a connected claw-free graph has a spanning tree whose stem is a spider. \\
\end{abstract}

\section{Introduction} 
 In this paper, we always consider simple graphs, which have neither loops nor multiple edges. For a
graph $G$, let $V(G)$ and $E(G)$ denote the set of vertices and the set of edges of G, respectively. We write $|G|$ for the order of $G$ (i.e., $|G| = |V(G)|).$ For a vertex $v$ of $G$, we denote by $\deg_{G}(v)$ the degree of $v$ in $G.$ For two vertices $u$ and $v$ of $G$,the distance between $u$ and $v$ in $G$ is denoted 
by $d_{G}(u, v)$.\\
For an integer $l\geqslant 2,$ let $\alpha^{l}(G)$ denote the number defined by
$$\alpha^{l}(G)=\max\{ |S|:S\subset V(G),d_{G}(x,y)\geqslant l\,\quad\text{for all distinct vertices }\,x,y\in S\}.$$
For an integer $k\geqslant$2, we define
$$\sigma_k^{l}(G)=\min\left\lbrace \sum_{a\in S}\deg_{G}(a):S\subset V(G),|S|=k, d_{G}(x,y)\geqslant l\quad\text{for all distinct vertices }\;x,y\in S\right\rbrace.$$
For convenience, we define $\sigma^{l}_{k}=+\infty$ if $\alpha^{l}(G)<k$. We note that, $\alpha^{2}(G)$ is often written $\alpha(G)$, which  is the \textit{independent number} of G, and $\sigma_k^{2}(G)$ is often witten $\sigma_{k}(G)$, which is the minimmum degree sum of $k$ independent vertices.

For a tree $T$, a vertex of degree at least three is called a \textit{branch vertex}, and a tree having at most one branch vertex is called a \textit{spider}. Many researchers have investigated the independent number conditions and the degree sum conditions for the existence of a spanning tree with bounded number of branch vertices or it is a spider (see \cite{GH}, \cite{GHHSV}, \cite{AK} and \cite{OY} for examples). A vertex of $T$, which has degree one, is often called a \textit{leaf} of $T$, and the set of leaves of $T$ is denoted by $Leaf(T)$. Many results were studied on the independent number conditions and the degree sum conditions for the existence of a spanning tree with bounded number of leaves (also see \cite{AK} and \cite{OY} for examples). Moreover, many analogue results for the claw-free graph are studied (see \cite{KKMO} and \cite{MOY} for examples). 

The subtree $T-Leaf(T)$ of $T$ is called the $stem$ of $T$ and is denoted by $Stem(T)$. Recently, M. Kano and his collaborations gave an innovation by studying a spanning tree in a graph with specified stem. We introduce here some of them. Their first result is the following.

\begin{theorem}[{\cite[Kano, Tsugaki and Yan]{KTY}}]\label{thm1}
Let $k\geqslant 2$ be an integer, and $G$ be a connected graph. If $\sigma_{k+1}(G)\geqslant|G|-k-1$, then $G$ has a spanning tree whose stem has maximum degree at most $k$.
\end{theorem}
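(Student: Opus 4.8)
The plan is to argue by contradiction. Suppose $G$ has no spanning tree whose stem has maximum degree at most $k$. Among all spanning trees of $G$ I would first fix one, say $T$, carrying the largest possible number of leaves, and, subject to that, extremal for a secondary quantity (for instance one that minimizes $\sum_{u\in V(T)}d_T(r,u)$ for a suitable reference vertex $r$, or minimizes the number of vertices whose stem-degree exceeds $k$); the precise tie-breaking rule will be chosen so that the exchange arguments below go through. By the contradiction hypothesis, $Stem(T)$ has a vertex $v$ with $\deg_{Stem(T)}(v)\ge k+1$, i.e.\ $v$ has at least $k+1$ neighbours $w_1,\dots,w_{k+1}$ in $T$ that are not leaves of $T$. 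Deleting $v$ from $T$ produces components $T_1,\dots,T_m$ with $m\ge k+1$, and after relabelling we may assume $w_i\in V(T_i)$; since each $w_i$ is internal, every $T_i$ with $1\le i\le k+1$ contains at least one leaf $\ell_i$ of $T$.

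The heart of the argument is to promote these branches into an independent set with small total degree. I would pick one leaf $\ell_i$ of $T$ inside each $T_i$ for $1\le i\le k+1$ and set $S=\{\ell_1,\dots,\ell_{k+1}\}$. The key claim is a \emph{confinement} property: for each $i$ one has $N_G(\ell_i)\subseteq V(T_i)\setminus\{\ell_i\}$, i.e.\ no $\ell_i$ is adjacent in $G$ either to $v$ or to any vertex of another branch. Granting this, the sets $V(T_1),\dots,V(T_{k+1})$ are pairwise disjoint and avoid $v$, so $S$ is independent and
\[
\sum_{i=1}^{k+1}\deg_G(\ell_i)=\sum_{i=1}^{k+1}\bigl|N_G(\ell_i)\bigr|\le\sum_{i=1}^{k+1}\bigl(|V(T_i)|-1\bigr)\le(|G|-1)-(k+1)=|G|-k-2 .
\]
Since $d_G(\ell_i,\ell_j)\ge 2$ for $i\ne j$, the set $S$ is admissible in the definition of $\sigma_{k+1}(G)$, whence $\sigma_{k+1}(G)\le|G|-k-2<|G|-k-1$, contradicting the hypothesis. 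This computation also explains why the threshold $|G|-k-1$ is the natural one and is consistent with the asserted sharpness.

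The main obstacle is precisely the confinement claim, and this is where the extremal choice of $T$ must be exploited. The mechanism is a rotation/exchange: if some $\ell_i$ were adjacent in $G$ to a vertex $y$ lying outside $V(T_i)$ (say $y=v$, or $y$ in another branch), then adding the edge $\ell_i y$ to $T$ and deleting a carefully selected edge of the resulting cycle would yield a new spanning tree $T'$ in which either the number of leaves strictly increases, or the number of leaves is preserved while the secondary quantity strictly improves, contradicting the maximality/minimality used to select $T$. Making each such rotation actually produce a \emph{strictly} better tree (rather than an equally good one) is the delicate point: it forces $\ell_i$ to be chosen judiciously inside $T_i$ (e.g.\ as a leaf farthest from $v$) and dictates exactly which secondary extremal condition to impose. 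Once the confinement claim is established in this way, the counting above closes the argument; I would therefore concentrate most of the effort on designing the extremal selection of $T$ together with the catalogue of rotations, so that every potential ``escaping'' edge from a chosen leaf $\ell_i$ is ruled out.
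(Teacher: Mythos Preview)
The paper does not prove this theorem; it is quoted from \cite{KTY} as background, and no argument for it appears anywhere in the present paper. There is therefore nothing in the paper to compare your attempt against.

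As for the proposal itself: the skeleton is the natural one, and the final counting is correct --- once the confinement claim $N_G(\ell_i)\subseteq V(T_i)\setminus\{\ell_i\}$ is in hand, the bound $\sigma_{k+1}(G)\le |G|-k-2$ follows exactly as you wrote. The gap is that your suggested rotation does not deliver the confinement claim under the extremal conditions you propose. Concretely, suppose $\ell_i$ is a leaf of $T$ lying in $T_i$ and $\ell_i y\in E(G)$ with $y\in V(T_j)$, $j\ne i$. Forming $T'=T+\ell_i y - vw_i$ (or $-vw_j$) turns $\ell_i$ into a non-leaf, possibly turns $w_i$ (resp.\ $w_j$) into a leaf, and possibly destroys a leaf at $y$; the net change in $|Leaf(T)|$ is at best $0$ and can be $-1$ or $-2$. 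So leaf-maximization alone yields no contradiction, and neither the distance-sum tiebreaker nor the ``farthest leaf'' choice you mention is shown to force a strict improvement. This is exactly the ``delicate point'' you flag, but you have not indicated a secondary invariant that actually decreases under every such exchange, nor a specific rule for selecting $\ell_i$ that blocks all escape edges. Until that is supplied, the argument is a plausible outline rather than a proof; the entire difficulty of the theorem is hidden in the step you leave open.
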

After that, the sufficient conditions for a connected graph to have a spanning tree whose stem has a few number of leaves were introduced as the following theorems.
\begin{theorem}[{\cite[Tsugaki and Zhang]{TZ}}]
Let $G$ be a connected graph and $k\geqslant 2$ be an integer. If $\sigma_{3}(G)\geqslant|G|-2k+1$, then $G$ have a spanning tree whose stem has at most $k$ leaves.
\end{theorem}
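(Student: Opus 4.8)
The plan is to argue by contradiction through an extremal spanning tree. Suppose $G$ has no spanning tree whose stem has at most $k$ leaves, and among all spanning trees choose one, $T$, whose stem $S = Stem(T)$ has as few leaves as possible; subject to this I would impose a secondary extremal condition (for instance, minimizing $|Leaf(T)|$, or minimizing the total distance from the leaves of $T$ to a fixed root) to make the rerouting arguments below go through. By assumption $S$ has $m \geqslant k+1$ leaves $b_1, \dots, b_m$. Since each $b_i$ is a leaf of $S$ but has degree $\geqslant 2$ in $T$, it has exactly one neighbour inside $S$ and at least one pendant leaf of $T$; I fix such a pendant leaf $\ell_i \in Leaf(T)$ for each $i$.

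The heart of the argument is a family of edge-exchange (rerouting) lemmas exploiting the minimality of $T$. First, if two pendant leaves $\ell_i, \ell_j$ (at distinct stem-leaves) were adjacent in $G$, then adding $\ell_i\ell_j$ and deleting the edge joining $\ell_j$ to $b_j$ would produce a spanning tree whose stem has strictly fewer leaves, a contradiction; hence the $\ell_i$ are pairwise non-adjacent, and in particular any three of them form an independent set. The same kind of exchange should yield: (i) the neighbourhoods $N_G(\ell_i)$ are pairwise disjoint; (ii) no $\ell_i$ is adjacent to any $b_j$ or $\ell_j$ with $j \neq i$; and (iii) since $S$ has at least three leaves it contains at least one branch vertex $c$ (an internal, hence non-leaf, vertex of $S$), and $c$ is not adjacent to any $\ell_i$. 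Each of these is to be proved by assuming the edge exists, performing the corresponding swap, and checking that the new stem has no more leaves while the secondary measure strictly improves, contradicting the choice of $T$.

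Now I fix three of the pendant leaves, say $\ell_1, \ell_2, \ell_3$, which are independent by the above, and count the vertices lying outside $N_G(\ell_1) \cup N_G(\ell_2) \cup N_G(\ell_3)$. By (ii) this set contains the $2(k-2)$ vertices $b_4, \dots, b_{k+1}, \ell_4, \dots, \ell_{k+1}$, together with $\ell_1, \ell_2, \ell_3$ themselves, and by (iii) the branch vertex $c$; these are pairwise distinct, so at least $2(k-2) + 3 + 1 = 2k$ vertices are missed by the union. Because the three neighbourhoods are pairwise disjoint by (i), $\deg_G(\ell_1)+\deg_G(\ell_2)+\deg_G(\ell_3) = |N_G(\ell_1)\cup N_G(\ell_2)\cup N_G(\ell_3)| \leqslant |G| - 2k$. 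Since $\{\ell_1,\ell_2,\ell_3\}$ is independent, this contradicts $\sigma_3(G) \geqslant |G| - 2k + 1$, completing the proof.

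The real work is in the middle paragraph: making every exchange argument valid simultaneously requires the right secondary extremal condition and careful bookkeeping of how a swap changes both $Leaf(T)$ and the set of leaves of the stem, since a single swap can turn an internal stem vertex into a leaf or vice versa. I expect the disjointness of the neighbourhoods in (i) and the exclusion of the branch vertex in (iii) to be the most delicate steps, the latter being precisely the ingredient that upgrades the count from $2k-1$ (which only ties the hypothesis) to the decisive $2k$. Degenerate configurations, such as stem-leaves lying at distance $2$, a stem that is nearly a star, or several pendant leaves sharing one stem-leaf, will likely need separate handling, and I regard assembling all of these into a single consistent exchange framework as the main obstacle.
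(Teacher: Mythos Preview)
The paper does not contain a proof of this statement: it is quoted as a result of Tsugaki and Zhang \cite{TZ} and never argued in the text, so there is no ``paper's own proof'' to compare your attempt against. What the paper does prove are the claw-free analogues (Theorems~1.8 and~1.9), and its Section~3 uses exactly the extremal-tree-plus-exchange machinery you outline, so your plan is in the right family of arguments.

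That said, as you yourself flag, the sketch has genuine gaps that are not just bookkeeping. Two stand out. First, your claim~(i) that the neighbourhoods $N_G(\ell_i)$ are pairwise disjoint needs $d_G(\ell_i,\ell_j)\geqslant 3$, and a naive swap need not decrease the number of stem leaves: if $\ell_i$ and $\ell_j$ share a common neighbour $w\in Leaf(T)$, then adding $\ell_i w$ and deleting $\ell_i b_i$ can simply trade the stem-leaf $b_i$ for a new stem-leaf $w$. In the analogous proofs (see Claim~3.5 here) this is handled not by an extra extremal condition but by choosing each $\ell_i$ more carefully, namely a leaf of $T$ at $b_i$ with $N_G(\ell_i)\subseteq Leaf(T)\cup\{b_i\}$; proving such a choice exists is itself an exchange argument. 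Second, your claim~(iii), that some branch vertex $c$ of the stem is missed by all three neighbourhoods, is the step that turns $2k-1$ into $2k$, but the swap ``add $\ell_i c$, delete $\ell_i b_i$'' again need not reduce the stem-leaf count when $b_i$ has other pendant leaves, so the argument as written does not go through. These are fixable, but they are the substance of the proof rather than details, and your proposal does not yet supply them.
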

\begin{theorem}[{\cite[Kano and Yan]{KY}}]
Let $G$ be a connected graph and $k\geqslant 2$ be an integer. If $\sigma_{k+1}(G)\geqslant|G|-k-1$, then G have a spanning tree whose stem has at most $k$ leaves.
\end{theorem}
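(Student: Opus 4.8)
The plan is to argue by contradiction, starting from an extremal spanning tree and converting a stem with many leaves into a large independent set of low degree sum that violates the hypothesis. Suppose, to the contrary, that no spanning tree of $G$ has a stem with at most $k$ leaves. Among all spanning trees of $G$, I would first choose one, $T$, for which the stem $S := \mathrm{Stem}(T)$ has the minimum number of leaves; by our contrary assumption, $S$ has at least $k+1$ leaves, so in particular $S$ is genuinely branching. To make edge-exchange arguments effective, I would impose a second extremal condition subject to the first --- a natural choice is to maximize $|\mathrm{Leaf}(T)|$ (equivalently, to minimize $|V(S)|$) among all spanning trees whose stems attain the minimum number of leaves.

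Next I would set up the candidate independent set. Every leaf $w$ of $S$ satisfies $\deg_S(w)=1$ while $\deg_T(w)\geq 2$, so $w$ is adjacent in $T$ to at least one leaf of $T$; fix one such pendant leaf and call it $\ell(w)$. Picking $k+1$ distinct stem-leaves $w_1,\dots,w_{k+1}$ and writing $\ell_i:=\ell(w_i)$, I obtain a set $L=\{\ell_1,\dots,\ell_{k+1}\}$ of $k+1$ distinct leaves of $T$. The whole argument then rests on two claims established by edge swaps: (i) $L$ is independent in $G$; and (ii) the neighborhoods $N_G(\ell_1),\dots,N_G(\ell_{k+1})$ are pairwise disjoint, avoid $L$ itself, and omit at least one further vertex of $G$. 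For (i), if $\ell_i\ell_j\in E(G)$ then $T+\ell_i\ell_j$ contains a cycle through $w_i$ and $w_j$; deleting an appropriate stem-edge on this cycle produces a spanning tree whose stem has fewer leaves, or the same number of leaves but a larger $|\mathrm{Leaf}(T)|$, contradicting the choice of $T$. For (ii), a common neighbor $z$ of $\ell_i$ and $\ell_j$, or a neighbor of $\ell_i$ lying in a forbidden position of the stem, would likewise allow a reattachment that either lowers the number of stem-leaves or improves the secondary quantity.

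Granting (i) and (ii), the count closes quickly. Since the neighborhoods are pairwise disjoint, contained in $V(G)\setminus L$, and miss one additional vertex, I get
\[
\sum_{i=1}^{k+1}\deg_G(\ell_i)=\Bigl|\bigcup_{i=1}^{k+1}N_G(\ell_i)\Bigr|\leq |G|-|L|-1=|G|-k-2.
\]
Because $L$ is an independent set of exactly $k+1$ vertices, this degree sum is an admissible value for $\sigma_{k+1}(G)$, so $\sigma_{k+1}(G)\leq |G|-k-2<|G|-k-1$, contradicting the hypothesis. Hence some spanning tree of $G$ has a stem with at most $k$ leaves.

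I expect the crux to be claim (ii), and within it the production of the single ``extra'' omitted vertex that upgrades the bound from $|G|-k-1$ to $|G|-k-2$. The pairwise disjointness and the independence in (i) are the familiar kind of one-edge-swap arguments, but the margin of $1$ is exactly what makes the statement best possible, so this extra vertex cannot come for free: it must be extracted from the internal structure of the branching stem $S$ (for instance from a branch vertex of $S$, or from a pendant leaf of $T$ that was not selected), and verifying that such a vertex escapes every $N_G(\ell_i)$ requires a careful, case-based exchange analysis that must also accommodate the degenerate situations where stem-leaves share neighbors or lie close together in $T$.
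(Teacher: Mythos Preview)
The paper does not actually prove this theorem; it is quoted from Kano--Yan~\cite{KY} as a background result, so there is no in-paper argument to match yours against. That said, your outline follows the same extremal template the paper deploys for its own main theorems (see Claims~3.4--3.7), so in spirit the approaches coincide.

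One point is worth sharpening. You flag the ``extra omitted vertex'' in claim~(ii) as the crux and leave it to a case analysis, but in fact it falls out for free once the containment of the neighbourhoods is formulated correctly. The stronger statement you want (the analogue of the paper's Claim~3.5) is that for each stem-leaf $w_i$ one can choose $\ell_i\in\mathrm{Leaf}(T)$ adjacent to $w_i$ with $N_G(\ell_i)\subseteq \mathrm{Leaf}(T)\cup\{w_i\}$. Granting this together with disjointness, one has
\[
\bigcup_{i=1}^{k+1}N_G(\ell_i)\subseteq\bigl(\mathrm{Leaf}(T)\setminus L\bigr)\cup\{w_1,\dots,w_{k+1}\},
\]
so $\sum_i\deg_G(\ell_i)\le|\mathrm{Leaf}(T)|=|G|-|V(S)|$. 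Since $S$ has at least $k+1\ge 3$ leaves it must contain a branch vertex, whence $|V(S)|\ge k+2$ and the bound $|G|-k-2$ follows immediately. The ``extra'' vertex is simply that branch vertex of $S$; no delicate case analysis is needed to produce it.

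Conversely, the step you treat as routine is where the real care lies: proving that some pendant $\ell_i$ at $w_i$ has \emph{all} its $G$-neighbours in $\mathrm{Leaf}(T)\cup\{w_i\}$, and that $d_G(\ell_i,\ell_j)\ge 3$ (hence disjoint neighbourhoods). Your current wording of (ii) --- ``avoid $L$ and omit one further vertex'' --- is too weak to close the count by itself; you need the explicit containment in $\mathrm{Leaf}(T)\cup\{w_i\}$, which is exactly what your secondary extremal condition (minimise $|V(S)|$) is designed to deliver via a reattach-all-pendants swap at $w_i$.
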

\begin{theorem}[{\cite[Kano and Yan]{KY}}]
	Let $G$ be a connected claw-free graph and $k\geqslant 2$ be an integer. If $\sigma_{k+1}(G)\geqslant|G|-2k-1$, then G have a spanning tree whose stem has at most $k$ leaves.
\end{theorem}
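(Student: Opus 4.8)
The plan is to argue by contradiction, working with a spanning tree chosen to be extremal with respect to its stem. Since $G$ is connected it has a spanning tree, and among all spanning trees of $G$ I would first choose $T$ so that the number of leaves of $Stem(T)$ is as small as possible and, subject to this, so that $|Stem(T)|$ is as small as possible (a further tie-breaking rule, e.g. maximizing $|Leaf(T)|$, can be added if needed). Suppose for contradiction that $Stem(T)$ has at least $k+1$ leaves $u_0,u_1,\dots,u_k$. Each $u_i$ lies in $Stem(T)$ and has degree $1$ there, so in $T$ exactly one neighbor of $u_i$ belongs to $Stem(T)$ while every other neighbor is a leaf of $T$; in particular each $u_i$ carries at least one leaf-neighbor $v_i\in Leaf(T)$, which I fix.

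First I would show that $S=\{v_0,\dots,v_k\}$ is an independent set of size $k+1$. If two of them, say $v_i$ and $v_j$, were adjacent in $G$, then $T+v_iv_j$ contains a unique cycle, which passes through $u_i$ and $u_j$ and hence through the edge $u_jv_j$; deleting $u_jv_j$ from this cycle gives a spanning tree in which $v_j$ is no longer a leaf-neighbor of $u_j$ while $v_i$ becomes internal. A short case analysis shows this either strictly lowers the number of leaves of the stem, or keeps it fixed while shrinking $Stem(T)$, contradicting the choice of $T$. Claw-freeness enters already here: if some $u_i$ carries two leaf-neighbors $a,b$, then applying the claw-free condition to $u_i$ together with its stem-neighbor forces an edge inside $\{a,b,\text{stem-neighbor}\}$, and each such edge yields an analogous tree modification. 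In this way the extremality of $T$ both normalizes the local picture (reducing, for instance, to $\deg_T(u_i)=2$) and guarantees that $S$ is independent.

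With $S$ independent and $|S|=k+1$, the hypothesis gives $\sum_i\deg_G(v_i)\ge\sigma_{k+1}(G)\ge|G|-2k-1$, so it suffices to establish the reverse estimate $\sum_i\deg_G(v_i)\le|G|-2k-2$. To this end I would localize the neighborhoods: the same rerouting arguments show that no $v_i$ is adjacent to any $v_j$ or to any $u_j$ with $j\neq i$, so that $N_G(v_i)\subseteq\big(V(G)\setminus(\{v_0,\dots,v_k\}\cup\{u_0,\dots,u_k\})\big)\cup\{u_i\}$ for every $i$. Writing $X$ for $V(G)\setminus(\{v_0,\dots,v_k\}\cup\{u_0,\dots,u_k\})$, so that $|X|=|G|-2(k+1)$, the bare independence and disjointness of the sets $N_G(v_i)\cap X$ only yield $\sum_i\deg_G(v_i)\le|G|-(k+1)$, which is exactly the general-graph bound. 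The decisive additional input, and the place where claw-freeness must buy the extra $k+1$ savings, is to show that the restricted neighborhoods leave at least $k+1$ vertices of $X$ entirely uncovered, equivalently that the closed neighborhoods $N_G[v_i]$ are pairwise disjoint and miss a guaranteed family of $k+1$ vertices. Charging each $v_i$ two vertices it fails to reach then gives $\sum_i\deg_G(v_i)\le|G|-2(k+1)$, the desired contradiction.

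The hard part will be precisely this last counting step. It requires a claw-free local analysis that is uniform over all configurations of the stem leaves: a single stem-leaf may a priori carry several leaves of $T$, and distinct $u_i$ may share a stem-neighbor (as happens when the stem is close to a spider), so the "extra" uncovered vertices cannot simply be taken to be the stem-neighbors of the $u_i$. In each configuration I must exhibit the required tree modifications, verify that they genuinely violate the extremal choice of $T$, and simultaneously produce the guaranteed uncovered vertices; organizing this bookkeeping—most cleanly through the closed neighborhoods $N_G[v_i]$—is where the claw-free hypothesis is indispensable and where essentially all the work lies. Once it is in place, the inequality $\sum_i\deg_G(v_i)\le|G|-2k-2$ contradicts $\sigma_{k+1}(G)\ge|G|-2k-1$, forcing $Stem(T)$ to have at most $k$ leaves and completing the proof.
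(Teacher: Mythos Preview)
This theorem is not proved in the present paper; it is cited from Kano and Yan~\cite{KY} as a known background result in the introduction, so there is no proof in the paper to compare your attempt against.

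Regarding the proposal on its own merits: what you have written is an outline, not a proof, and you say so yourself. You correctly set up an extremal spanning tree, pick leaves $v_i$ hanging from the stem-leaves $u_i$, and recognise that the target inequality is $\sum_i \deg_G(v_i)\le |G|-2k-2$. But the step that distinguishes the claw-free bound from the general bound---showing that the closed neighbourhoods $N_G[v_i]$ are pairwise disjoint and together miss a further $k+1$ vertices---is precisely the step you label ``the hard part'' and do not carry out. Even earlier there is a gap: you invoke ``disjointness of the sets $N_G(v_i)\cap X$'' to get the general bound $|G|-(k+1)$, yet you have only argued that the $v_i$ are independent, which gives $d_G(v_i,v_j)\ge 2$, not the $d_G(v_i,v_j)\ge 3$ needed for disjoint open neighbourhoods. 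That too requires a separate rerouting argument. In the paper's proofs of its own main theorems (Section~3), the analogous steps are handled by proving the much stronger statement $d_G(y_i,y_j)\ge 4$ via explicit path-and-cycle modifications of $T$ (Claims~\ref{claim 3}--\ref{claim 5}); a version of that machinery, together with a claw-free local argument at each $u_i$ controlling the number of $T$-leaves hanging there, is what would be needed to complete your sketch.
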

The following theorem gives two sufficient conditions for a connected graph to have a spanning tree whose stem has a bounded number of branch vertices.
\begin{theorem}[{\cite[Yan]{Yan}}] \label{thm-1}
Let $G$ be a connected graph and $k$ be a non-negative integer. If one of the following conditions holds, then $G$ have a spanning tree whose stem has at most $k$ branch vertices.
\begin{enumerate}
\item[(i)]$\alpha^{4}(G)\leqslant k+2.$
\item[(i\hspace{-.1em}i)]$\sigma^{4}_{k+3}(G)\geqslant |G|-2k-3.$
\end{enumerate}
\end{theorem}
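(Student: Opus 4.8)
The plan is to argue by contradiction using an extremal spanning tree, and to treat (i) and (ii) in parallel. First I would dispose of the interaction between the two parts: for (ii) we may assume $\alpha^{4}(G)\geqslant k+3$, since otherwise $\alpha^{4}(G)\leqslant k+2$ and part (i) already yields the conclusion. Thus in both cases the core task is the same, namely to show that if $G$ has \emph{no} spanning tree whose stem has at most $k$ branch vertices, then $G$ contains $k+3$ vertices that are pairwise at distance at least $4$; under (i) this violates $\alpha^{4}(G)\leqslant k+2$ directly, and under (ii) I additionally keep track of degrees to contradict the $\sigma^{4}_{k+3}$ bound.

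Among all spanning trees of $G$ I would choose one, $T$, that minimizes the number of branch vertices of $Stem(T)$, breaking ties by a suitable secondary lexicographic rule (for instance, minimizing $|Leaf(T)|$ and then the number of vertices of $Stem(T)$). Assume for contradiction that $Stem(T)$ has at least $k+1$ branch vertices. Since in any tree the number of leaves exceeds the number of branch vertices by at least two, $Stem(T)$ then has at least $k+3$ leaves $\ell_{1},\dots,\ell_{s}$ with $s\geqslant k+3$. Each such leaf $\ell_{i}$ of the stem is not a leaf of $T$, so it carries at least one pendant leaf $x_{i}\in Leaf(T)$ of $T$; I would fix one such $x_{i}$ for every $i$. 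In the tree these chosen leaves are pairwise at distance at least $4$, because distinct leaves of $Stem(T)$ are at stem-distance at least $2$, whence $d_{T}(x_{i},x_{j})=1+d_{Stem(T)}(\ell_{i},\ell_{j})+1\geqslant 4$.

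The decisive step is to upgrade this tree-distance bound to a genuine distance bound in $G$: I claim that, after the extremal choice of $T$, the vertices $x_{1},\dots,x_{s}$ are pairwise at distance at least $4$ in $G$. This is where the exchange argument enters. If two of them, say $x_{i}$ and $x_{j}$, were joined in $G$ by a path of length at most $3$, I would use that path to reroute $T$---deleting a pendant edge at $\ell_{i}$ or $\ell_{j}$ and inserting a shortcut edge of $G$---so as to merge the two branches of the stem and strictly decrease the number of branch vertices of the stem, or, failing a strict decrease, to decrease the chosen secondary quantity. Checking that every possible short connection (an edge $x_{i}x_{j}$, a common neighbor, or a path of length three through interior vertices) produces such an improving modification, and hence cannot occur, is the main obstacle; it requires a careful case analysis and is the reason the tie-breaking conditions must be set up precisely. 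Granting this, $\{x_{1},\dots,x_{k+3}\}$ is a distance-$4$ set of size $k+3$, which already contradicts (i).

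For (ii) I would refine the count. Fix $X=\{x_{1},\dots,x_{k+3}\}$. Because these vertices are pairwise at distance at least $4$, their closed neighborhoods $N_{G}[x_{1}],\dots,N_{G}[x_{k+3}]$ are pairwise disjoint, so $\sum_{i}(\deg_{G}(x_{i})+1)\leqslant|G|$. To reach the sharper threshold I would exhibit $k+1$ further vertices lying outside every $N_{G}[x_{i}]$: the $k+1$ branch vertices of $Stem(T)$ are the natural candidates, since the same exchange argument that forbids short paths between the $x_{i}$ also forbids an edge of $G$ from a stem branch vertex to any $x_{i}$ (such an edge would again allow a branch-reducing reroute). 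This gives $\sum_{i}\deg_{G}(x_{i})\leqslant|G|-(k+3)-(k+1)=|G|-2k-4$. On the other hand $X$ is a distance-$4$ set of size $k+3$, so by definition $\sum_{i}\deg_{G}(x_{i})\geqslant\sigma^{4}_{k+3}(G)\geqslant|G|-2k-3$, a contradiction. Hence $Stem(T)$ has at most $k$ branch vertices, completing both parts.
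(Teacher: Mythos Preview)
The paper does not prove this statement itself; it is quoted from \cite{Yan}. However, the paper's proofs of Theorems~\ref{thm-main} and~\ref{thm-main2} are explicit adaptations of Yan's argument, so one can read off Yan's method there. That method differs structurally from yours: instead of taking a spanning tree minimising the number of stem branch vertices, one chooses a \emph{maximal subtree} $T$ whose stem has \emph{exactly} $k$ branch vertices (conditions (C1)--(C3)), so that a vertex $v\in V(G)\setminus V(T)$ exists. The distance-$4$ set of size $k+3$ is then $\{v,y_{1},\dots,y_{k+2}\}$, where the $y_i$ are special leaves of $T$, and the extra vertex $v$ is what makes the degree count go through cleanly.

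Your scheme is a plausible alternative, but two points are genuine gaps rather than routine details. First, you fix ``one pendant leaf $x_i$'' at each stem leaf $\ell_i$ arbitrarily; for your final count you then need every stem branch vertex to lie outside $N_G[x_i]$. This is false for an arbitrary pendant: a branch vertex $b$ may well be adjacent in $G$ to some pendant leaf at $\ell_i$, and the swap $T-x_i\ell_i+x_ib$ need not improve any of your optimisation criteria (for instance when $\ell_i$ carries several pendants). In Yan's argument this is exactly why one first proves an analogue of the paper's Claim~\ref{claim 3}: for each $\ell_i$ there exists a pendant $y_i$ with $N_G(y_i)\subseteq Leaf(T)\cup\{\ell_i\}$, and that proof relies on the specific tie-breakers $|Leaf(Stem(T))|$ and $|Stem(T)|$, not on your suggested $|Leaf(T)|$. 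Second, the ``main obstacle'' you flag---upgrading $d_T(x_i,x_j)\geqslant 4$ to $d_G(x_i,x_j)\geqslant 4$---is the heart of the proof (cf.\ Claim~\ref{claim 4}), and its resolution again depends on having first secured the $N_G(y_i)\subseteq Leaf(T)\cup\{\ell_i\}$ property. Without specifying the correct tie-breaking rules and the correct choice of pendant leaves, neither your distance claim nor your count of $k+1$ excluded branch vertices is justified.
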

When $k=1,$ Theorem \ref{thm-1} gives a previous result as the following.
\begin{theorem}[{\cite[Kano and Yan]{KY15}}]
	Let $G$ be a connected graph. If $\sigma^{4}_{4}(G)\geqslant |G|-5,$ then $G$ have a spanning tree whose stem is a spider.
\end{theorem}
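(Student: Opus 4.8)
The plan is to obtain this statement as the special case $k=1$ of Theorem~\ref{thm-1}, rather than to reprove anything from scratch. The first step is to match the hypothesis. In Theorem~\ref{thm-1} the parameter $k$ ranges over all non-negative integers, and condition (ii) reads $\sigma^{4}_{k+3}(G)\geqslant|G|-2k-3$. Substituting $k=1$ gives $k+3=4$ and $2k+3=5$, so condition (ii) becomes $\sigma^{4}_{4}(G)\geqslant|G|-5$, which is precisely the hypothesis of the present statement. Thus the assumption here is exactly the instance $k=1$ of Theorem~\ref{thm-1}(ii), and I would simply invoke that theorem.

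The second step is to translate the conclusion into the desired form. Under condition (ii), Theorem~\ref{thm-1} produces a spanning tree of $G$ whose stem has at most $k$ branch vertices; for $k=1$ this is a spanning tree whose stem has at most one branch vertex. By the definition recalled in the introduction, a tree having at most one branch vertex is a \emph{spider}, so the stem of the tree just produced is a spider. This is exactly the asserted conclusion, completing the derivation.

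I do not expect any genuine obstacle: the entire argument is the specialization $k=1$ of the already-available Theorem~\ref{thm-1}(ii), so the only things to check are the two arithmetic identities $k+3=4$ and $2k+3=5$ at $k=1$ and the terminological identification of ``stem with at most one branch vertex'' with ``stem is a spider.'' Consequently the proof reduces to applying Theorem~\ref{thm-1}(ii) with $k=1$ and reading off the resulting spanning tree.
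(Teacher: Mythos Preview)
Your proposal is correct and matches the paper's own treatment exactly: the paper states this theorem immediately after Theorem~\ref{thm-1} with the sentence ``When $k=1$, Theorem~\ref{thm-1} gives a previous result as the following,'' which is precisely your specialization argument.
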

\begin{remark}
	We remark that all conditions which were mentioned above are best possible.
\end{remark}
We are very interested in this topic. So, we would like to study the spanning tree in a graph with specified stem. The purpose of this paper is to give some sufficient conditions for a connected claw-free graph to have a spanning tree whose stem has at most $k$ branch vertices. In particular, our main theorems are the followings.
\begin{theorem}\label{thm-main}
	Let $G$ be a connected claw-free graph and $k$ be a non-negative integer. If $\sigma^{4}_{k+3}(G)\geqslant |G|-2k-5$, then $G$ has a spanning tree whose stem has at most $k$ branch vertices.
	\end{theorem}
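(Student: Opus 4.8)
The plan is to argue by contradiction. Suppose $G$ has no spanning tree whose stem has at most $k$ branch vertices, and among all spanning trees of $G$ choose one, $T$, for which the number of branch vertices of $Stem(T)$ is minimum; subject to this, take $|Leaf(T)|$ minimum, and subject to both impose a further minimality condition (designed to forbid the local improvements used below). By assumption $Stem(T)$ then has at least $k+1$ branch vertices. The first step is a purely tree-theoretic count: in any tree the number of leaves equals $2+\sum_{\deg(v)\geqslant 3}(\deg(v)-2)$, so applying this to the tree $Stem(T)$ shows that $Stem(T)$ has at least $(k+1)+2=k+3$ leaves. Each leaf $s$ of $Stem(T)$ has exactly one neighbour in $Stem(T)$ and, since $s\notin Leaf(T)$, at least one neighbour in $Leaf(T)$; for $k+3$ chosen leaves $s_1,\dots,s_{k+3}$ of $Stem(T)$ fix leaves $\ell_1,\dots,\ell_{k+3}\in Leaf(T)$ of $T$ with $s_i\ell_i\in E(G)$.

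The heart of the argument is to extract from the $s_i$ and $\ell_i$ a set $S=\{x_1,\dots,x_{k+3}\}$ of $k+3$ vertices that are pairwise at distance at least $4$ in $G$. The idea is that if two representatives lay within distance $3$, the connecting path would provide an edge along which $T$ can be rewired so as to either delete a branch vertex from the stem or decrease $|Leaf(T)|$, contradicting the minimality of $T$. Claw-freeness is used locally here: at a leaf $s$ of $Stem(T)$ with two leaf-neighbours $\ell,\ell'$, the three vertices $s',\ell,\ell'$ (where $s'$ is the stem-neighbour of $s$) cannot induce a claw, which forces an edge among them; such a chord is exactly what is needed to rewire $T$ and simplify it. This lets us normalise the local configuration at each stem-leaf and then verify the distance-$4$ separation branch by branch.

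With $S$ pairwise at distance at least $4$, the closed neighbourhoods $N_G[x_i]$ are pairwise disjoint and no edge joins distinct $N_G(x_i)$, whence
\[
\sum_{i=1}^{k+3}\deg_G(x_i)=\Big|\bigcup_{i=1}^{k+3}N_G(x_i)\Big|=|G|-(k+3)-|R|,
\]
where $R$ is the set of vertices lying in neither $S$ nor any $N_G(x_i)$. To defeat the hypothesis I must produce at least $k+3$ vertices in $R$. The construction is arranged so that the $k+1$ branch vertices of $Stem(T)$ all lie in $R$ (the same distance conditions that separate the $x_i$ keep the branch vertices out of every $N_G(x_i)$); this alone gives $|R|\geqslant k+1$ and reproduces the general, non-claw-free estimate of Theorem~\ref{thm-1}. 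The extra saving of $2$ is where claw-freeness is indispensable: the normalised local structure at (at least two of) the stem-leaves forces two further uncovered vertices into $R$. Thus $|R|\geqslant k+3$, so $\sum_{i}\deg_G(x_i)\leqslant |G|-2k-6<|G|-2k-5$, and therefore $\sigma^{4}_{k+3}(G)\leqslant\sum_{i}\deg_G(x_i)<|G|-2k-5$, contradicting the hypothesis.

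I expect the middle step to be the main obstacle. The delicate point is to secure the pairwise distance-$4$ condition for all $k+3$ representatives while simultaneously squeezing out the two additional uncovered vertices that separate the claw-free bound $|G|-2k-5$ from the general bound $|G|-2k-3$. Both demands rest on a careful case analysis of the local configurations at the stem-leaves and branch vertices, driven by the minimality of $T$ and by repeated appeals to the absence of induced claws; pinning down the constants so precisely (which is also what should make the bound best possible) is the most technical part of the proof.
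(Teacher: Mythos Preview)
Your overall scheme is reasonable but differs from the paper at a point that turns out to be decisive, and this creates a genuine gap. The paper does \emph{not} take a spanning tree: it fixes a (necessarily non-spanning) tree $T$ whose stem has \emph{exactly} $k$ branch vertices, maximal in order, then minimal in $|Leaf(Stem(T))|$ and $|Stem(T)|$. Because $T$ is not spanning there is a vertex $v\in V(G)\setminus V(T)$ adjacent to some leaf $u$ of $T$, and the distance-$4$ set used is $\{v,y_1,\dots,y_{k+2}\}$ (one outside vertex plus $k+2$ leaves), not $k+3$ leaves as in your outline. The claw-free improvement of $2$ is obtained from the claim that the set $M$ of degree-$2$ vertices of $Stem(T)$ satisfies $|M|\geqslant 3$; this is proved by looking at a vertex $w\in M$ with $uw\in E(G)$ (such $w$ exists precisely because $u$ has an outside neighbour $v$ and (C1) forbids $u$ from touching a stem leaf or a stem branch vertex), and then using claw-freeness at $w$ with its two stem neighbours and $u$ to rewire $T$ and enlarge it via $uv$. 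The whole argument hinges on that outside edge $uv$.

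In your setup $T$ is spanning, so there is no vertex $v$ outside $T$, and the mechanism above is unavailable. Your substitute---claw-freeness at a stem leaf $s$ with two $T$-leaf neighbours $\ell,\ell'$---does not obviously help: if $\ell\ell'\in E(G)$ the natural swap does not decrease $|Leaf(T)|$, and if $s'\ell\in E(G)$ you have only rerouted a leaf. More to the point, nothing in your minimality conditions forces any stem leaf to have two leaf neighbours, so the case analysis may be vacuous. In the critical case $m=k+1$ and $l=k+3$ your count gives only $|R|\geqslant k+1+|M|$, and you have supplied no argument that $|M|\geqslant 2$ under your optimisation. The unspecified ``further minimality condition'' does not fill this; you would need to exhibit a concrete invariant and a concrete rewiring that uses claw-freeness to force two extra uncovered vertices without an outside vertex to push into $T$. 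As written, the proposal reproduces the general (non-claw-free) bound but does not secure the additional saving of $2$.
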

\begin{theorem}\label{thm-main2}
	Let $G$ be a connected claw-free graph and $k$ be a non-negative integer. If $\sigma^{5}_{2}(G)\geqslant |G|-3k-6$, then $G$ has a spanning tree whose stem has at most $k$ branch vertices.
\end{theorem}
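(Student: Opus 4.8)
The plan is to argue by contradiction from an extremal spanning tree. I would fix a spanning tree $T$ of $G$ chosen so that the number of branch vertices of $Stem(T)$ is as small as possible and, subject to this, so that $|Leaf(T)|$ is as large as possible (equivalently $|Stem(T)|$ is as small as possible); the secondary condition is what lets the local rearrangements below contradict the choice of $T$. Assume, for contradiction, that $Stem(T)$ has at least $k+1$ branch vertices. The handshake identity $\sum_{v}(\deg_{Stem(T)}(v)-2)=-2$ on the tree $Stem(T)$ then forces at least $k+3$ leaves in $Stem(T)$, and since a leaf of $Stem(T)$ is not a leaf of $T$ it must carry at least one leaf of $T$ in $T$.

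The engine of the argument is the claw-free hypothesis in the form $\alpha(G[N_G(v)])\le 2$: no vertex has three pairwise non-adjacent neighbours. Locally, at any branch vertex $u$ of $Stem(T)$ two of its three tree-neighbours are joined by an edge of $G$, and such chords, together with any non-tree edges produced by a short $G$-path, are the raw material for rerouting $T$ into a spanning tree with strictly fewer branch vertices in its stem (or a larger $|Leaf(T)|$), contradicting the choice of $T$. Globally, claw-freeness is exactly what allows the weaker-looking $\sigma^5_2$ hypothesis -- only two far-apart vertices -- to do the work that the $k+3$ mutually distant vertices do in Theorem \ref{thm-main}: the forced adjacencies among the branches effectively merge them in pairs, so that a single pair of extremal leaves already ``sees'' almost all of $G$.

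With these tools in hand I would pick two leaves $x$ and $y$ of $T$ hanging at two leaves of $Stem(T)$ that are as far apart as possible inside $Stem(T)$, and prove $d_G(x,y)\ge 5$. This is the crux: if $d_G(x,y)\le 4$, a short $G$-path between $x$ and $y$ supplies non-tree edges which, combined with the claw-free chords above, let me rebuild a spanning tree whose stem has fewer branch vertices, contradicting the minimality of $T$. Once $d_G(x,y)\ge 5$ holds, $N_G[x]$ and $N_G[y]$ are disjoint and joined by no edge, so the hypothesis gives at most $|G|-(\deg_G x+\deg_G y+2)\le 3k+4$ vertices outside $N_G[x]\cup N_G[y]$. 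On the other hand, the $\ge k+1$ branch vertices of $Stem(T)$, the $\ge k+1$ leaves of $Stem(T)$ other than the two carrying $x$ and $y$, the pendant leaves of $T$ at those stem-leaves, and the two stem-neighbours of the leaves carrying $x,y$ together account for more than $3k+4$ vertices, all at distance $\ge 2$ from both $x$ and $y$ -- the contradiction that finishes the proof.

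I expect the main obstacle to be precisely the establishment of $d_G(x,y)\ge 5$ together with the matching lower bound on the number of ``middle'' vertices: both hinge on a careful case analysis of how short $G$-paths and the pendant-leaf structure at the stem-leaves interact with the claw-free rerouting, and it is exactly here that the constant $3k+6$ in the hypothesis is pinned down. By comparison, the handshake count, the disjointness of $N_G[x]$ and $N_G[y]$, and the closing arithmetic are routine.
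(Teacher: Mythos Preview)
Your setup departs from the paper's in a way that removes the very leverage the argument needs. The paper does \emph{not} work with a spanning tree. It fixes a (generally non-spanning) tree $T$ whose stem has \emph{exactly} $k$ branch vertices, maximal first with respect to $|T|$ (condition (C1)), and then uses a vertex $v\in V(G)\setminus V(T)$ together with one special leaf $y_i$ as the pair witnessing $\sigma_2^5$. The key claw-free steps---that any stem vertex adjacent to the leaf $u$ next to $v$ has stem-degree $2$, that the stem has at least three degree-$2$ vertices (so $|Stem(T)|\ge l+k+3$), and above all that $d_G(v,y_i)\ge 5$---are all proved by producing a new tree that \emph{strictly contains more vertices} than $T$, contradicting (C1). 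Every reroute ends by adjoining the edge $u'v$ (or $uv$) to pick up $v$. In a spanning tree there is no such $v$, so none of these arguments are available to you.

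Concretely, your crux ``$d_G(x,y)\ge 5$ for two leaves of the spanning tree'' is precisely the step the paper does \emph{not} attempt: it only proves $d_G(y_i,y_j)\ge 4$ between two leaves (Claim~3.7), and upgrades to distance $5$ only for the pair $(v,y_i)$ with $v\notin V(T)$ (Claim~3.9). Your sketch ``a short $G$-path \ldots\ lets me rebuild a spanning tree whose stem has fewer branch vertices'' is not substantiated, and a path $y_i\,x_i\,s\,x_j\,y_j$ with $s$ a common $G$-neighbour of $x_i,x_j$ shows that distance $4$ is not obviously excluded by your extremal conditions. Even if one grants $d_G(x,y)\ge 5$, your final count is fragile: the ``two stem-neighbours of the leaves carrying $x,y$'' may themselves be branch vertices and hence already counted, and without the $|M|\ge 3$ bound (which in the paper comes from (C1)) a direct computation gives only $\deg_G(x)+\deg_G(y)\le |G|-2l-(k+1)+2\le |G|-3k-5$, one short of a contradiction. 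The paper's non-spanning-tree device is not incidental; it is what makes both the distance-$5$ claim and the sharp arithmetic go through.
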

Applying the main theorems with $k=1,$ we give the following.
\begin{theorem}\label{thm-main1}
	Let $G$ be a connected claw-free graph. If $\sigma^{4}_{4}(G)\geqslant |G|-7$ or $\sigma^{5}_{2}(G)\geqslant |G|-9$, then $G$ has a spanning tree whose stem is a spider.
\end{theorem}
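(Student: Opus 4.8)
The plan is to derive this result as a direct specialization of the two main theorems, Theorem~\ref{thm-main} and Theorem~\ref{thm-main2}, to the case $k=1$, combined with the observation that a tree with at most one branch vertex is precisely a spider. Since both main theorems are assumed already established, the work here is purely a matter of substituting the parameter and matching the terminology; I do not expect any genuine obstacle beyond verifying that the two numerical hypotheses align correctly.

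First I would recall the definition given in the introduction: a \emph{spider} is a tree having at most one branch vertex. Consequently, the statement ``the stem is a spider'' is logically identical to ``the stem has at most one branch vertex.'' This identification is the only conceptual step, and it reduces the corollary to plugging $k=1$ into each of the main theorems.

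Next I would treat the two hypotheses separately. For the first, set $k=1$ in Theorem~\ref{thm-main}. Its degree-sum condition $\sigma^{4}_{k+3}(G)\geqslant|G|-2k-5$ becomes $\sigma^{4}_{4}(G)\geqslant|G|-7$, and its conclusion yields a spanning tree whose stem has at most $k=1$ branch vertices; by the above identification this stem is a spider. For the second, set $k=1$ in Theorem~\ref{thm-main2}. Its condition $\sigma^{5}_{2}(G)\geqslant|G|-3k-6$ becomes $\sigma^{5}_{2}(G)\geqslant|G|-9$, and again the conclusion gives a spanning tree whose stem has at most one branch vertex, hence a spider.

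Finally I would assemble these two implications: under either hypothesis $\sigma^{4}_{4}(G)\geqslant|G|-7$ or $\sigma^{5}_{2}(G)\geqslant|G|-9$, the corresponding main theorem applies and produces the desired spanning tree, completing the argument. The only point warranting care is the arithmetic check of the specialized bounds $|G|-2\cdot 1-5=|G|-7$ and $|G|-3\cdot 1-6=|G|-9$, which is immediate; there is no substantive difficulty to overcome.
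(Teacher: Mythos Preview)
Your proposal is correct and matches the paper's approach exactly: the paper simply states that Theorem~\ref{thm-main1} follows by applying Theorems~\ref{thm-main} and~\ref{thm-main2} with $k=1$, and your substitution and identification of ``stem is a spider'' with ``stem has at most one branch vertex'' is precisely what is needed.
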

\section{Sharpness of Theorem \ref{thm-main} and Theorem \ref{thm-main2}}
We first show that the condition of Theorem \ref{thm-main}
is best possible. Let  $m,k\geqslant 1$ be integers, and let $D_{1},\dots,D_{k+3}$ be disjoint copies of $K_{m}$ and $D=K_{k+3}$ with distinct vertices $z_1, ..., z_{k+3}$. Let $v_{1},\dots,v_{k+3}$ be vertices not contained in $D\cup D_{1}\cup\dots\cup D_{k+3}$. Join $z_{i},v_{i}$ to all vertices of $D_{i}(1\leqslant i\leqslant k+3)$ by edges, respectively. Let $G$ denote the resulting graph. Then $G$ is a connected claw-free graph. Setting $V= \{v_1, \dots , v_{k+3}\}.$ We are easy to see that for each set $S$ such that $S\subset V(G),|S|=k+3$ and $ d_{G}(x,y)\geqslant 4$ for all distinct vertices $x,y\in S$, then $S=V$ or $S=(V\setminus\{v_j\})\cup \{y_j\},$ where $y_j \in D_j.$ Then we can compute that $\sum_{a\in S}\deg_{G}(a) = |G|-2k-6$ for the first case and $\sum_{a\in S}\deg_{G}(a) = |G|-2k-5$ for the last case. So we get $\sigma^{4}_{k+3}(G)=|G|-2k-6$. On the other hand, since for any spanning tree $T$ of $G,$ then there are at least $k+1$ points in the set $\{z_{1},z_{2},\dots,z_{k+3}\}$ must be the branch vertices of $Stem(T).$ So $G$ has no spanning tree who stem has at most $k$ branch vertices. Therefore, the condition of Theorem \ref{thm-main} is best possible.

Now we also consider graph $G$ above with $m=1.$ So we may see that $\sigma^{5}_{2}(G)=2= |G|-3k-7.$ Moreover, for every spanning tree $T$ of $G,$ then there are at least $k+1$ points in the set $\{z_{1},z_{2},\dots,z_{k+3}\}$ must be the branch vertices of $Stem(T).$ This shows that the condition of Theorem \ref{thm-main2} is best possible.
\section{Proofs of Theorem \ref{thm-main} and Theorem \ref{thm-main2}}
Beside of giving some refinements of the proofs in \cite{KY} and \cite{Yan}, we will use some analogue arguments of them to prove Theorem \ref{thm-main} and Theorem \ref{thm-main2}.\\
Firstly, we recall the following useful lemma.
\begin{lemma}\label{main-lem1}
Let $T$ be a tree, and let $X$ be the set of vertices of degree at least 3. Then the number of leaves in $T$ is counted as follow:
$$|Leaf(T)|=\sum_{x\in X}(\deg_{T}(x)-2)+2.$$
\end{lemma}
Assume that $G$ satisfies the condition in Theorem \ref{thm-main} and does not have spanning tree whose stem has at most $k$ branch vertices. We choose a tree $T$ whose stem has $k$ branch vertices in $G$ so that
\begin{enumerate}
\item[(C1)]$|T|$ is as large as possible.
\item[(C2)]$|Leaf(Stem(T))|$ is as small as possible subject to (C1).
\item[(C3)]$|Stem(T)|$ is as small as possible subject to (C1), (C2).
\end{enumerate}
By the choice (C1), we have the following claim.
\begin{claim}\label{claim 1}
For every $v\in V(G)-V(T), N_{G}(v)\subseteq Leaf(T)\cup(V(G)-V(T))$.
\end{claim}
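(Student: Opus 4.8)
The plan is to prove this by contradiction, relying solely on the maximality condition (C1); note that neither the claw-free hypothesis nor the degree-sum bound $\sigma^{4}_{k+3}(G)\geqslant|G|-2k-5$ is needed for this particular claim. Suppose the assertion fails. Then there exist a vertex $v\in V(G)-V(T)$ and a neighbor $u\in N_{G}(v)$ with $u\in V(T)$ but $u\notin Leaf(T)$. Since $u$ is a non-leaf of $T$ we have $\deg_{T}(u)\geqslant 2$, so $u\in V(Stem(T))$.

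First I would form the tree $T':=T+v+uv$ obtained by attaching $v$ as a pendant vertex at $u$. As $v\notin V(T)$ and $u\in V(T)$, the graph $T'$ is again a tree and $|T'|=|T|+1$. The key step is then to verify that $Stem(T')$ and $Stem(T)$ have the same branch vertices. Because $v$ is a pendant vertex, it is a leaf of $T'$; because $\deg_{T}(u)\geqslant 2$, the vertex $u$ stays a non-leaf in $T'$; and no other vertex changes its leaf status. Hence $Leaf(T')=Leaf(T)\cup\{v\}$, which gives $V(Stem(T'))=V(T')-Leaf(T')=V(T)-Leaf(T)=V(Stem(T))$. Since the only new edge $uv$ is incident to the leaf $v$ and therefore lies outside the stem, in fact $Stem(T')=Stem(T)$ as graphs, so the number of branch vertices of $Stem(T')$ equals that of $Stem(T)$, namely $k$.

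Thus $T'$ is a tree whose stem has $k$ branch vertices, yet $|T'|>|T|$, contradicting the choice (C1). Therefore no such $u$ can exist, and for every $v\in V(G)-V(T)$ we conclude $N_{G}(v)\subseteq Leaf(T)\cup(V(G)-V(T))$. The only subtle point, and the step I would be most careful about, is ensuring that attaching the pendant vertex does not create an additional branch vertex in the stem; this is resolved by the observation above that adding a leaf at an already-interior vertex $u$ leaves the vertex set and edge set of the stem literally unchanged.
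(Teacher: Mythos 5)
Your proof is correct and matches the paper's intent: the paper states this claim as an immediate consequence of (C1) without writing out the details, and the argument it has in mind is exactly yours — attach $v$ as a pendant at a stem vertex $u$, observe that $Stem(T+uv)=Stem(T)$ so the branch-vertex count stays at $k$, and contradict the maximality of $|T|$. Your careful check that adding a leaf at an interior vertex leaves the stem literally unchanged is the right (and only) point that needs verifying.
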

$Stem(T)$ has $k$ branch vertices. Denote the number of leaves of $Stem(T)$ by $l$. By Lemma \ref{main-lem1}, $Leaf(Stem(T))=l\geqslant k+2$. Let $x_{1},x_{2},\dots,x_{l}$ be the leaves of $Stem(T)$. Since $T$ is not a spanning tree of $G$, there exist two vertices $v\in V(G)-V(T)$ and $u\in Leaf(T)$ which are adjacent in $G$. Thus, we have the following claim.
\begin{claim}\label{claim 1b}
If $u$ is adjacent with a vertex $w$ of $stem(T)$ then $\deg_{stem(T)}(w) = 2.$
\end{claim}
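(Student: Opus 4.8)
The plan is to argue by contradiction, using only the size–maximality condition (C1): I would show that if some $w\in Stem(T)$ adjacent to $u$ has $\deg_{Stem(T)}(w)\neq 2$, then $G$ contains a tree strictly larger than $T$ whose stem still has at most $k$ branch vertices, contradicting (C1). This is a pure tree–augmentation argument; the claw-free hypothesis plays no role here. The whole point is that attaching the exterior vertex $v$ to the leaf $u$ enlarges the tree by one, so the only way to avoid a contradiction with (C1) is for the enlargement to force a brand–new branch vertex into the stem, and that happens in exactly one configuration of $\deg_{Stem(T)}(w)$.

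First I would settle the vertex $w_{0}$ which is the unique neighbour of the leaf $u$ in $T$. Form $T_{1}:=T+uv$, the tree obtained by appending the new vertex $v$ to $u$. Then $|T_{1}|=|T|+1$, the vertex $u$ now has degree $2$ and becomes a stem vertex, while $v$ is the only new leaf; concretely $Stem(T_{1})=Stem(T)+u$ with $u$ a pendant attached at $w_{0}$. Passing from $Stem(T)$ to $Stem(T_{1})$, the only degree that changes is that of $w_{0}$, which goes up by exactly one, and $u$ itself has stem-degree $1$. Hence a new branch vertex appears precisely when $\deg_{Stem(T)}(w_{0})$ jumps from $2$ to $3$. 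If instead $\deg_{Stem(T)}(w_{0})\leqslant 1$ or $\deg_{Stem(T)}(w_{0})\geqslant 3$, then $Stem(T_{1})$ has the same number $k$ of branch vertices as $Stem(T)$, so $T_{1}$ is a strictly larger tree whose stem has at most $k$ branch vertices, contradicting (C1). Therefore $\deg_{Stem(T)}(w_{0})=2$.

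For a stem vertex $w$ adjacent to $u$ in $G$ with $w\neq w_{0}$ (so $uw\in E(G)\setminus E(T)$), I would instead reroute: set $T_{w}:=(T-uw_{0})+uw+uv$, again adjoining $v$. Since $u$ is a leaf of $T$, deleting $uw_{0}$ isolates $u$ and adding $uw$ re-attaches it, so $T_{w}$ is a tree on $V(T)\cup\{v\}$ with $|T_{w}|=|T|+1$; in it $u$ has degree $2$ (stem-degree $1$) and $v$ is a leaf. Now $w$ gains the stem-neighbour $u$, while $w_{0}$ only loses the leaf-child $u$. By the previous paragraph $\deg_{Stem(T)}(w_{0})=2$, hence $\deg_{T}(w_{0})\geqslant 3$, so $w_{0}$ keeps degree $\geqslant 2$ in $T_{w}$ and remains a stem vertex with unchanged stem-degree; thus its branch status does not change, and the only vertex whose branch status can change is $w$. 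Repeating the count, $\deg_{Stem(T)}(w)\neq 2$ again produces a strictly larger qualifying tree, contradicting (C1), so $\deg_{Stem(T)}(w)=2$.

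The hard part is the branch-vertex bookkeeping under these local surgeries: one must check that each modified graph is genuinely a tree, that the old neighbour $w_{0}$ does not drop out of the stem after losing $u$ (which is exactly why the case $w=w_{0}$ must be disposed of first, to guarantee $\deg_{T}(w_{0})\geqslant 3$), and that no vertex other than the target $w$ can alter its branch status. Once this is organised, the cases $\deg_{Stem(T)}(w)\leqslant 1$ and $\deg_{Stem(T)}(w)\geqslant 3$ are ruled out simultaneously, leaving $\deg_{Stem(T)}(w)=2$.
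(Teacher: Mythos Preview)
Your argument is correct and uses the same augmentation idea as the paper: add the edge $uv$ and check that unless the relevant stem vertex has stem-degree exactly $2$, the enlarged tree still has only $k$ branch vertices in its stem, contradicting (C1). The paper's own proof only writes down the case where $w$ is the (unique) $T$-neighbour $w_{0}$ of $u$, using $T_{1}=T+uv$; you reproduce that step verbatim and then add the rerouting $T_{w}=(T-uw_{0})+uw+uv$ to cover a stem vertex $w\neq w_{0}$ that is merely $G$-adjacent to $u$, a case the paper's proof of this claim does not explicitly address (it is effectively patched later, in the proof of Claim~\ref{claim 4b}, via exactly the rerouting you give). So your proof is the same in spirit but more careful in scope.
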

\begin{proof}
	Suppose that $u$ is adjacent with a vertex $w$ but $\deg_{stem(T)}(w) \not= 2.$\\
	If $\deg_{stem(T)}(w) = 1$ then $z$ is a leaf of $stem(T).$ We consider a new tree $T_1 = T +uv$ then $stem(T_1)$ have $k$ branch vertices and $|T_1| > |T|.$ This contradicts the condition (C1).\\
	If $\deg_{stem(T)}(w) \geq 3$ than $w$ is a branch vertex of $stem(T).$ We also consider a new tree $T_1 = T +uv$ then $stem(T_1)$ also have $k$ branch vertices and $|T_1| > |T|.$ This contradicts the condition (C1). Claim \ref{claim 1b} is proved.
	\end{proof}
Now, we use the properties of the claw-free graph to give the following claim.
\begin{claim}\label{claim 6}
	Set $M = \{ w \in Stem(T)|\deg_{Stem(T)}(w)=2 \}.$ Then $|M| \geq 3$.
\end{claim}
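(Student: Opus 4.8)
The plan is to exploit the claw-free hypothesis together with the extremal choice of $T$ to force the stem to contain at least three vertices of degree exactly $2$. First I would recall the setup: we have a vertex $v\in V(G)-V(T)$ adjacent to a leaf $u$ of $T$, and by Claim~\ref{claim 1b} any stem-vertex $w$ adjacent to such a $u$ must satisfy $\deg_{Stem(T)}(w)=2$, so it already lies in $M$. The key idea is to look locally at the leaf $u$ and its neighbours. Since $v\notin V(T)$ is adjacent to $u$, the vertex $u$ is not isolated from outside the tree; I would examine the neighbourhood $N_G(u)$ and apply the claw-free condition to the triple consisting of $u$, the outside vertex $v$, and the unique neighbour of $u$ on the stem (call it $w_1$, which exists because $u$ is a leaf of $T$ and its parent lies in $Stem(T)$).

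Next I would argue that claw-freeness propagates adjacencies along the stem so as to produce degree-$2$ vertices. The main mechanism: if $w$ is a stem vertex with $\deg_{Stem(T)}(w)\geq 3$, then among three stem-neighbours of $w$ together with $u$ or $v$ one can often locate an induced $K_{1,3}$, which is forbidden; ruling this out forces the relevant stem vertices to have degree $2$. More concretely, I would take $w_1$ (the parent of $u$ on the stem, which is in $M$ by Claim~\ref{claim 1b}) and then walk outward along the stem, using at each branch vertex the claw-free property applied to the branch vertex and a suitable independent triple among its stem-neighbours and an adjacent non-tree vertex, to manufacture additional members of $M$. Because $Stem(T)$ has exactly $k$ branch vertices and $l\geq k+2$ leaves, Lemma~\ref{main-lem1} guarantees the stem is large enough that there is genuine "room" between leaves and branch vertices for degree-$2$ vertices to appear; I would combine the leaf count $l\geq k+2$ with the branch count $k$ to show the stem cannot consist solely of leaves and branch vertices when $l\geq k+2$ and such an adjacency $uv$ exists, thereby producing the count $|M|\geq 3$.

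I would structure the argument as: (a) produce one vertex of $M$ from the adjacency $uv$ via Claim~\ref{claim 1b}; (b) produce a second vertex of $M$ by considering a second leaf of the stem and the non-tree vertex forced near it, again invoking claw-freeness to block a claw centred at a would-be high-degree stem vertex; (c) produce a third by either repeating the argument at a distinct leaf/branch location or by a counting argument from Lemma~\ref{main-lem1} showing that with $l\geq k+2$ leaves and only $k$ branch vertices the path segments joining them must pass through at least three degree-$2$ vertices. At each stage the contradiction with (C1) (enlarging $T$) or with claw-freeness (an induced $K_{1,3}$) does the work, and the extremal choices (C2), (C3) may be needed to pin down the local structure so the claw argument applies cleanly.

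The hard part will be step (b)–(c): carefully selecting, at each branch vertex or each leaf, an \emph{independent} triple that certifies a claw, since the stem-neighbours of a branch vertex need not be pairwise non-adjacent in $G$ (they are in $Stem(T)$, but $G$ may have extra edges). I expect the genuine obstacle to be ensuring the three candidate members of $M$ are \emph{distinct}, and handling the degenerate cases where the stem is short or where the same non-tree vertex $v$ is the only one available—these small-stem configurations will likely require a separate direct check, possibly leaning on the bound $l\geq k+2$ to guarantee enough distinct leaves and hence enough distinct degree-$2$ vertices to reach the count $3$.
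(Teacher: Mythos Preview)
Your proposal misidentifies where the claw-free hypothesis does its work, and the counting fallback in step~(c) is simply false.

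On the counting argument: a stem with $k$ branch vertices and $l\geq k+2$ leaves need not contain three (or even any) degree-$2$ vertices. For $k=1$ the stem could be $K_{1,3}$, which has none; for $k=0$ the stem is a path, and a path on $2$, $3$ or $4$ vertices has $0$, $1$ or $2$ interior vertices respectively. So Lemma~\ref{main-lem1} alone cannot yield $|M|\geq 3$; the bound genuinely depends on the edge $uv$ and the extremal choice of $T$.

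On the location of the claw: you propose to centre the forbidden $K_{1,3}$ at a branch vertex of the stem, using its stem-neighbours together with ``an adjacent non-tree vertex''. But the only non-tree vertex available is $v$, and $v$ is adjacent to the leaf $u$, not to an arbitrary branch vertex; nothing in the setup produces a non-tree vertex near a second leaf or near a branch vertex, so steps~(b) and~(c) have no leverage.

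The paper's argument proceeds instead by contradiction on $|M|\leq 2$ and centres the claw at the degree-$2$ vertex $w$ itself (the stem-neighbour of $u$, which lies in $M$ by Claim~\ref{claim 1b}). The two stem-neighbours $y,t$ of $w$ together with $u$ are three neighbours of $w$ in $G$, so claw-freeness forces one of $uy$, $ut$, $yt$ to be an edge. The decisive point is that when $|M|\leq 2$, one can arrange that $y,t$ are \emph{not} both degree-$2$ stem vertices (they are leaves or branch vertices of $Stem(T)$), and this is precisely what permits a local edge swap followed by adding $uv$ to produce a tree whose stem still has $k$ branch vertices but strictly larger order, contradicting~(C1). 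The case $|M|=2$ needs a short subcase analysis according to whether the second member of $M$ is a stem-neighbour of $w$. Your outline never isolates this rerouting step at $w$, which is the actual engine of the proof.
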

\begin{proof}
	Otherwise, by Claim \ref{claim 1b} we have two following cases.
	
	Case 1. $|M| = 1.$ We call $w \in M$ then $u$ is adjacent with $w$ by Claim \ref{claim 1b}. Let $y, t$ be two adjacent vertices of $w$ in $Stem(T).$ Here $y$ and $t$ are branch vertices, leaves or one leaf and one branch vertex.  By definition of the claw-free graph, then either $uy$ or $ut$ or $yt$ is an edge in $G.$ We consider a new tree 
	\begin{align*}
		T_2 = 
		\begin{cases}
			T-wu +uy+uv & \text { if $uy \in E(G)$},\\
			T-tw +ut +uv& \text { if $ut \in E(G)$},\\
			T-tw +ty+uv & \text { if $yt \in E(G)$}.
		\end{cases}
	\end{align*}
	Then, by $y$ is a branch vertex or a leaf of $T$, the resulting tree $T_2$ of $G$ is a tree whose stem has $k$ branch vertices and the order of the resulting tree is greater than $|T|$, which contradicts the condition (C1).
		
	Case 2. $|M| = 2.$ We call $w_1, w_2 \in M.$ Without loss of generality we may assume that $u$ is adjacent with $w_1$ by Claim \ref{claim 1b}. If $w_1$ is not adjacent with $w_2$ in $Stem(T)$ then by using the same arguments in case 1 we get a contradiction. On the other hand, if $w_1$ is adjacent with $w_2$ then let $y$ be another adjacent vertex of $w_1$ in $Stem(T).$ By definition of the claw-free graph, then either $uy$ or $uw_2$ or $yw_2$ is an edge in $G.$ We consider a new tree 
	\begin{align*}
		T_2 = 
		\begin{cases}
			T-yw_1 +uy+uv & \text { if $uy \in E(G)$},\\
			T-w_1w_2 +uw_2 +uv& \text { if $uw_2 \in E(G)$},\\
			T-w_1w_2 +yw_2+uv & \text { if $yw_2 \in E(G)$}.
		\end{cases}
	\end{align*}
	Then, by $y$ is a branch vertex or a leaf of $T$, resulting tree $T_2$ of $G$ is a tree whose stem has $k$ branch vertices and the order of the resulting tree is greater than $|T|$, which contradicts the condition (C1).\\
Claim \ref{claim 6} is proved.	
\end{proof}
\begin{claim}\label{claim 2}
Leaf(Stem(T)) is an independent set of $G.$
\end{claim}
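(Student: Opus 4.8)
The plan is to argue by contradiction through an edge-exchange that improves one of the extremal conditions (C1)--(C3). Suppose that two distinct leaves $x_i,x_j$ of $Stem(T)$ are adjacent in $G$, and let $P$ be the unique $x_i$--$x_j$ path in $T$; since $Stem(T)$ is a subtree containing both endpoints, $P$ lies entirely in $Stem(T)$. Because each $x_i$ is a leaf of $Stem(T)$ but not a leaf of $T$, it carries at least one pendant leaf of $T$, and its unique stem-neighbor $w_i$ lies on $P$. Adding the edge $x_ix_j$ to $T$ creates exactly one cycle, namely $P$ together with $x_ix_j$; I will delete a single, carefully chosen edge $e$ of $P$ to obtain a tree $T'$ with $V(T')=V(T)$, so that (C1) is preserved with equality, and then show that $T'$ beats $T$ with respect to (C2) or (C3).

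First I would treat the case $k\geqslant 1$. Here $Stem(T)$ is not a path, so every leaf--leaf path, and in particular $P$, must pass through a branch vertex; let $b$ be the branch vertex of $Stem(T)$ on $P$ nearest to $x_i$, and take $e$ to be the edge of $P$ incident with $b$ on the $x_i$-side. Deleting $e$ lowers $\deg_{Stem}(b)$ by one, while adding $x_ix_j$ raises the stem-degrees of $x_i$ and $x_j$ from $1$ to $2$, so neither of them becomes a branch vertex and no new branch vertex is created. Applying Lemma \ref{main-lem1} to $Stem(T')$, the drop in $\deg_{Stem}(b)$ forces $|Leaf(Stem(T'))|\leqslant |Leaf(Stem(T))|-1$, contradicting (C2).

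For $k=0$ the stem is a path with the two leaves $x_1,x_2$, and I would instead delete the stem-edge $x_1w_1$ at one end. Then $x_2$ acquires stem-degree $2$, the reattached arm turns $Stem(T)$ into another path, and if $\deg_T(w_1)=2$ the vertex $w_1$ drops to a leaf of $T'$, so $|Stem(T')|=|Stem(T)|-1$, contradicting (C3).

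The main obstacle is the residual subcase in which the chosen neighbor (namely $w_1$ when $k=0$, or the vertex just past $b$ when $k\geqslant 1$) itself carries a pendant leaf of $T$; then it survives in $Stem(T')$ as a new stem-leaf and the naive exchange improves neither (C2) nor (C3). To break this I would invoke the claw-free hypothesis exactly as in the proof of Claim \ref{claim 6}: among the three $T$-neighbors around such a degree-two stem vertex (its two stem-neighbors and its pendant) some pair is adjacent in $G$, and rerouting along that edge either restores the degree drop or lets me simultaneously attach the outside vertex $v$ through the edge $uv$ supplied before Claim \ref{claim 1b}, strictly increasing $|T'|$ and contradicting (C1) instead. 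Verifying that this rerouting always lands in one of the three extremal contradictions, uniformly over the position of $x_i,x_j$ relative to the branch vertices, is the delicate bookkeeping on which the argument turns.
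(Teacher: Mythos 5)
Your exchange is the same one the paper uses for Claim \ref{claim 2} --- add $x_ix_j$, delete an edge of the resulting cycle incident with a branch vertex, and argue that the stem-leaf count drops --- but you omit a case that the paper treats explicitly and that your argument cannot skip. If the branch vertex $b$ you select has $\deg_{Stem(T)}(b)=3$, then deleting $e$ demotes $b$ to stem-degree $2$, so $Stem(T')$ has only $k-1$ branch vertices. The extremal choices (C1)--(C3) are made over trees whose stem has exactly $k$ branch vertices, so this $T'$ lies outside the comparison class and ``contradicting (C2)'' does not follow. The paper closes precisely this case by a different move: it appends the edge $uv$ (with $v\in V(G)-V(T)$ and $u\in Leaf(T)$ adjacent to $v$, supplied before Claim \ref{claim 1b}) to the modified tree, obtaining a strictly larger tree whose stem still has at most $k$ branch vertices, and contradicts (C1) instead. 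Without that step your argument is incomplete whenever $b$ has stem-degree exactly $3$.

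Your closing paragraph also misplaces the difficulty. For $k\geqslant 1$ the ``residual subcase'' you flag --- the neighbour of $b$ on the $x_i$-side carrying a pendant leaf of $T$ and surviving as a new stem-leaf --- is harmless: $x_i$ and $x_j$ both acquire stem-degree $2$ and cease to be stem-leaves, while at most one new stem-leaf is created at the broken end, so $|Leaf(Stem(T'))|=|Leaf(Stem(T))|-1$ in every configuration and no claw-free rerouting is needed; Lemma \ref{main-lem1} already gives this once you check that only $b$'s contribution to the degree sum changes. Where the exchange genuinely fails is $k=0$: if the end-vertex $w_1$ keeps a pendant leaf of $T$, your $T'$ improves none of (C1)--(C3), and you leave the promised claw-free repair as unverified ``delicate bookkeeping.'' That subcase must actually be closed (for instance by first adding $uv$ to create a branch vertex of the stem and then breaking the cycle at it, which increases $|T|$ and contradicts (C1)); as written, the proof has a genuine gap there in addition to the missing $(k-1)$-branch-vertex case.
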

Assume that there exists two vertices $x_{i}$ and $x_{j}$ of $Leaf (Stem(T ))$ which are adjacent in $G.$ Then add $x_{i}$ and $x_{j}$ to $T$. The resulting subgraph of $G$ includes a unique
cycle, which contains an edge $e_{1}$
of $Stem(T)$ incident with a branch vertex. By removing the edge $e_{1}$, we obtain the resulting tree $T_3$ such that $Stem(T_3)$ has at most $k$ branch vertices, $|T_3|=|T|$ and $|Leaf(Stem(T_3))\leqslant|Leaf(Stem(T))|-1$. If $Stem(T)$ 
has $k- 1$ branch vertices, then add $uv$ to $T_3$
; we obtain a tree whose stem has at
most $k$ branch vertices and the order of the tree is greater than $|T|$, which contradicts the condition (C1). Otherwise, $T_3$
contradicts the condition (C2). Hence
$Leaf(Stem(T))$ is an independent set of $G$.
\begin{claim}\label{claim 3}
	For every 
	$x_{i}(1 \leqslant i \leqslant l)$, there exists a vertex $y_{i}\in Leaf(T)$ adjacent to $x_{i}$
	and $N_{G}(y_{i})\subset Leaf(T)\cup\left\{x_{i}\right\}$.
\end{claim}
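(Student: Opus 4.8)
The plan is to fix a leaf $x_i$ of $Stem(T)$, let $s_i$ be its unique neighbour in $Stem(T)$, and let $L_i$ be the set of leaves of $T$ adjacent to $x_i$. Since $x_i\in Stem(T)$ is not a leaf of $T$ yet has stem-degree $1$, we have $\deg_T(x_i)\geq 2$ and hence $L_i\neq\emptyset$; the vertex $y_i$ we seek will be drawn from $L_i$. I would first dispose of external neighbours: if some $y\in L_i$ were adjacent to a vertex $v'\in V(G)-V(T)$, then in $T+yv'$ the vertex $y$ acquires degree $2$ and joins the stem as a new stem-leaf hanging at $x_i$, while $x_i$ merely moves from stem-degree $1$ to stem-degree $2$; thus $Stem(T+yv')$ still has exactly $k$ branch vertices but $|T+yv'|>|T|$, contradicting (C1). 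Consequently $N_G(y)\subseteq V(T)$ for every $y\in L_i$, and such a $y$ can fail the desired conclusion only by having a neighbour in $Stem(T)\setminus\{x_i\}$.

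Next I would argue by contradiction: suppose every $y\in L_i$ is adjacent to some stem vertex $p_y\in Stem(T)\setminus\{x_i\}$. The key manipulation is the exchange $T\mapsto T+yp_y-x_iy$, which destroys the unique cycle that $yp_y$ creates. I would use it first to reduce to the case $|L_i|=1$: if $|L_i|\geq 2$, then after this exchange $y$ becomes a leaf hanging at $p_y$, while $x_i$ keeps the remaining children of $L_i$ together with $s_i$, so $x_i$ is still a stem-leaf and in fact $Stem(T+yp_y-x_iy)=Stem(T)$. Hence the new tree is again extremal for (C1)--(C3) and has one fewer child at $x_i$; since adjacencies in $G$ are unchanged, its remaining children are still adjacent to stem vertices other than $x_i$. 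Iterating, I may assume $L_i=\{y\}$.

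With $L_i=\{y\}$ and $y$ adjacent to $p\in Stem(T)\setminus\{x_i\}$, the exchange $T+yp-x_iy$ drops $\deg_T(x_i)$ from $2$ to $1$, so $x_i$ becomes a leaf; the only tree-degrees that change are those of $x_i$, $y$ and $p$, so $s_i$ stays internal and $Stem(T+yp-x_iy)=Stem(T)\setminus\{x_i\}$. The effect is exactly the deletion of the stem-leaf $x_i$, which lowers $\deg_{Stem(T)}(s_i)$ by one, and I would read off the consequence from this value. If $s_i$ drops from a branch vertex of degree exactly $3$ to degree $2$, the stem loses a branch vertex, and re-attaching the external edge $uv$ (still available, since $u\in Leaf(T)\subseteq Leaf(T+yp-x_iy)$ and $v\notin V(T)$) gives a larger tree whose stem has at most $k$ branch vertices, contradicting (C1). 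If $\deg_{Stem(T)}(s_i)\geq 4$, the branch count stays $k$ but the number of stem-leaves drops by one, contradicting (C2). If $\deg_{Stem(T)}(s_i)=2$, then $s_i$ becomes a new stem-leaf, so the leaf count is unchanged while $|Stem|$ strictly decreases, contradicting (C3). This contradiction forces some $y\in L_i$ to be private, which is the asserted $y_i$.

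I expect the main obstacle to be precisely this final bookkeeping: verifying in each of the three cases that exactly one of (C1), (C2), (C3) is violated while the other two are respected, and checking that the external edge $uv$ and the stem structure survive the exchanges intact. The reduction from $|L_i|\geq 2$ to a single child is the other delicate point, since it relies on the relocation exchange leaving $Stem(T)$ literally unchanged so that extremality is preserved. It is worth noting that, unlike Claim \ref{claim 6}, this argument is purely a tree-exchange argument and does not use claw-freeness.
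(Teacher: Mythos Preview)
Your argument is correct and follows essentially the same plan as the paper's: both show that if every leaf child of $x_i$ had a neighbour in $Stem(T)\setminus\{x_i\}$, one can relocate those children so that $x_i$ drops out of the stem, contradicting one of (C1)--(C3). The only differences are cosmetic: the paper performs all relocations simultaneously (rather than your iterative reduction to $|L_i|=1$) and then splits into just two cases according to whether or not $s_i$ is a branch vertex; your three-way split on $\deg_{Stem(T)}(s_i)$, handling the degree-$3$ case via the external edge $uv$ and (C1), is in fact slightly more careful than the paper's direct appeal to (C2) in that situation.

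One small omission: your three cases do not cover $\deg_{Stem(T)}(s_i)=1$, which occurs precisely when $k=0$ and $Stem(T)$ is a single edge. This is easily patched (after the exchange the stem-leaf count drops from $2$ to at most $1$, contradicting (C2)), or simply ruled out by the earlier claim that $|M|\ge 3$.
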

\begin{proof}
	It is easy to see that for each leaf $x\in Leaf(Stem(T)),$ there exists at least a vertex $y$ in $Leaf(T)$ adjacent to $x.$ Now, for every leaf $y$ of $T$ adjacent to a leaf of $Stem(T)$ in $T$, $y$ is not adjacent to any vertex of $V(G)-V(T).$ Indeed, otherwise we can add an edge joining $y$ to a vertex of $V(G)-V(T)$ to $T$ then the resulting tree contradicts the condition (C1).
	\\\indent Suppose that for some $1\leqslant i\leqslant l$, each leave $y_{i_j}$ of $T$ adjacent to $x_{i},$ is also adjacent to a vertex $z_{i_j}\in (Stem(T)-\left\{x_{i}\right\})$. Then for every leaf $y_{i_j}$ adjacent to $x_{i}$ in $T$, remove the edge $y_{i_j}x_{i}$ from $T$ and add the edge $y_{i_{j}}z_{i_{j}}$. Denote the resulting tree of $G$ by $T_{4}$. Then $T_{4}$ is a tree which has at most $k$ branch vertices. If $x_{i}$ is adjacent with a branch of $Stem(T)$, then $Leaf(Stem(T_{4}))=Leaf(Stem(T))-\{x_{i}\}$, which contradicts the condition (C2). If $x_{i}$ is not adjacent with a branch of $Stem(T)$, then $Stem(T_{4})=Stem(T)-\left\{x_{i}\right\}$, which contradicts the condition (C3). Therefore, the claim holds.
\end{proof}
\begin{claim}\label{claim 4}
For any two dictinct vertices $y,z\in \left\{v,y_{1},y_{2},\dots,y_{l}\right\},d_{G}\left(y,z\right)\geqslant 4$.
\end{claim}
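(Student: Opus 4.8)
The plan is to argue by contradiction: suppose some two vertices $y,z\in\{v,y_1,\dots,y_l\}$ satisfy $d_G(y,z)\le 3$, fix a shortest $y$--$z$ path $P$ (of length $1$, $2$ or $3$), and derive a contradiction with one of (C1), (C2), (C3). The two standing tools are the neighborhood restrictions $N_G(y_i)\subseteq Leaf(T)\cup\{x_i\}$ (Claim \ref{claim 3}) and $N_G(v)\subseteq Leaf(T)\cup(V(G)-V(T))$ (Claim \ref{claim 1}), which force the interior vertices of $P$ to be leaves of $T$ (or among the $x_i$), together with the claw-free hypothesis, which I would apply repeatedly to the middle vertices of $P$ to produce a chord pinning their $T$-parents down to the relevant $x_i$. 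I also record the elementary fact that, since $Stem(T)$ has $k$ branch vertices, when $k\ge 1$ the path in $Stem(T)$ joining two distinct stem-leaves contains a branch vertex (otherwise all its internal vertices have stem-degree $2$ and its ends are leaves, so the path is all of $Stem(T)$, forcing $k=0$); the degenerate case $k=0$, where $Stem(T)$ is a path with $l=2$ leaves, is treated directly.

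I would first dispose of the pairs $y_i,y_j$ with $i\ne j$. If $y_iy_j\in E(G)$, then $T+y_iy_j$ contains a unique cycle through the $Stem(T)$-path from $x_i$ to $x_j$; deleting a stem edge incident with a branch vertex on that cycle yields a tree of the same order whose stem has at most $k$ branch vertices and strictly fewer stem-leaves, exactly as in the proof of Claim \ref{claim 2}, contradicting (C2) (or, after re-adding $uv$, (C1)). For $d_G(y_i,y_j)=2$, the common neighbor $w$ lies in $Leaf(T)$ by Claim \ref{claim 3}; applying claw-freeness to $w$ and its $T$-parent $p_w$ (using $y_i\not\sim y_j$) forces $p_w\in\{x_i,x_j\}$, so $w$ is a pendant leaf at some $x_i$, and the edge $wy_j$ reduces to the adjacency case just treated. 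The length-$3$ case is handled the same way: Claim \ref{claim 3} and claw-freeness applied to the interior leaves again relocate one endpoint to a pendant leaf of the appropriate $x_i$, after which a single stem-edge deletion at a branch vertex contradicts (C2).

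For the pairs $v,y_i$, note first that $d_G(v,y_i)=1$ is impossible, since $y_i\sim v$ would force $v\in Leaf(T)\cup\{x_i\}$ by Claim \ref{claim 3}, contradicting $v\notin V(T)$. If $d_G(v,y_i)=2$ with midpoint $w$, then $w\in Leaf(T)$ (Claim \ref{claim 3} rules out $w=x_i$ via Claim \ref{claim 1}); applying claw-freeness to $w$ with its three distinct neighbors $v,y_i,p_w$, and using $v\not\sim y_i$ together with $v\not\sim p_w$ (Claim \ref{claim 1}, as $p_w\notin Leaf(T)$) and Claim \ref{claim 3}, forces $p_w=x_i$. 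Thus $w$ is a pendant leaf at the stem-leaf $x_i$, and $T+vw$ is a tree of order $|T|+1$ whose stem is $Stem(T)\cup\{w\}$ with $\deg(w)=1$ and $\deg(x_i)=2$ there, hence still at most $k$ branch vertices, contradicting (C1). The length-$3$ case $v\sim a\sim b\sim y_i$ splits on whether $a$ lies inside or outside $T$ and on whether $b=x_i$ or $b\in Leaf(T)$; in each branch Claims \ref{claim 1} and \ref{claim 3} plus claw-freeness let me reattach the relevant pendant vertex onto $x_i$ and append $v$, producing a tree of order $|T|+1$ or $|T|+2$ whose stem has at most $k$ branch vertices, again contradicting (C1).

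The main obstacle is the length-$3$ analysis: there the interior of $P$ has two vertices whose locations (a leaf of $T$ at an unknown parent, a stem-leaf $x_i$, or a vertex outside $T$) must be determined before the correct surgery is available, and one must check in every branch both that the rebuilt graph is a tree of order at least $|T|$ and that its stem acquires no new branch vertex. Claw-freeness is precisely the lever that makes this tractable: applied to each interior vertex it yields a chord which, through Claim \ref{claim 3} or Claim \ref{claim 1}, forces that vertex's $T$-parent to be some $x_i$, reducing every configuration to attaching a pendant at a stem-leaf, where the branch-vertex count is easy to control. Once Claim \ref{claim 4} is established, $\{v,y_1,\dots,y_l\}$ supplies $k+3$ vertices pairwise at distance at least $4$, which is what will feed the hypothesis $\sigma^4_{k+3}(G)\ge|G|-2k-5$ in the final counting step.
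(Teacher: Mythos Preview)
Your route is genuinely different from the paper's, and more laborious than necessary. The paper proves this claim \emph{without using claw-freeness at all}. For the pair $\{v,y_i\}$ (and analogously for $\{y_i,y_j\}$) it takes a shortest path $P$ and argues a single dichotomy: either every internal vertex of $P$ lies in $Leaf(T)\cup(V(G)-V(T))\cup\{x_i\}$ (respectively $\cup\{x_i,x_j\}$), in which case one surgery---add $P$ to $T$, detach the leaves of $P\cap Leaf(T)$ from their old stem parents (keeping $y_ix_i$, $y_jx_j$), and in the $y_i$--$y_j$ case delete a stem edge on the resulting cycle incident with a branch vertex---produces a tree contradicting (C1) or (C2); or some internal vertex $s$ of $P$ lies in $Stem(T)\setminus\{x_i\}$ (respectively $\setminus\{x_i,x_j\}$), and then Claims~\ref{claim 1} and~\ref{claim 3} give $d_G(v,s)\ge 2$ and $d_G(s,y_i)\ge 2$ (respectively $d_G(y_i,s),d_G(s,y_j)\ge 2$), so the total distance is at least $4$. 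No case split on the length of $P$ is needed, and claw-freeness is reserved for Claims~\ref{claim 1b}, \ref{claim 6} and \ref{claim 4b}, where it is genuinely required.

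Your length-by-length analysis with repeated claw-freeness applications can in principle be pushed through, but as written the length-$3$ cases are only sketched, and some of the branches you summarize do not reduce as cleanly as you suggest. For instance, in a path $y_i\,a\,b\,y_j$ with $a,b\in Leaf(T)$, the claw at $b$ may resolve as $a\sim p_b$ rather than $y_j\sim p_b$, and then $p_b$ is not forced into $\{x_i,x_j\}$; further reattachments and a second claw application are needed before you reach a usable configuration. The paper's dichotomy handles all lengths uniformly and sidesteps this casework entirely. What your approach would buy, if completed, is a more ``local'' argument that never needs the global surgery of rerouting an entire path through $T$; but here that global surgery is cheap, and the local route costs more than it saves.
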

\begin{proof}
First, we show that $d_{G}\left(v,y_{i}\right)\geqslant 4$ for every $1\leqslant i\leqslant l$. Let $P_{i}$ be the shortest path connecting $v$ and $y_{i}$ in $G$.  If all the vertices of $P_{i}$ between $v$ and $y_{i}$ are contained in $Leaf(T)\cup (V(G)-V(T))\cup\left\{x_{i}\right\}$. Then add $P_{i}$ to $T$ (if $P_{i}$ passes through $x_{i}$, we just add the segment of $P_{i}$ between $v$ and $x_{i})$ and remove the edges of $T$ joining $V(P_{i}\cap Leaf(T))$ to $V(Stem(T))$ except the edge $y_{i}x_{i}.$ Then resulting tree of $G$ is a tree whose stem has at most $k$ branch vertices and the order of the resulting tree is greater than $|T|$, which contradicts the condition (C1). Then there exists a vertex $s\in V(P_{i})$ with $s\in V(Stem(T))-\left\{x_{i}\right\}$. Hence, by Claim \ref{claim 1} and Claim \ref{claim 3}, $d_{G}(v,s)\geqslant 2$ and $d_{G}(s,y_{i})\geqslant 2$. Therefore, $d_{G}(v,y_{i})=d_{G}(v,s)+d_{G}(s,y_{i})\geqslant 4.$\\
\indent Next, we show that $d_{G}(y_{i},y_{j})\geqslant 4$ for all $1\leqslant i<j\leqslant l$. Let $P_{ij}$ be the shortest path connecting $y_{i}$ and $y_{j}$ in $G$. We note that if $P_{ij}$ passes through $x_{i}$ (or $x_{j}$), then $y_{i}x_{i}\in E(P_{ij})$ (or $y_{j}x_{j}\in E(P_{ij})$), respectively. If all vertices of $P_{ij}$ between $y_{i}$ and $y_{j}$ are contained in $Leaf(T)\cup(V(G)-V(T))\cup\left\{x_{i},x_{j}\right\}$. Then add $P_{ij}$ to $T$ to remove the edges of $T$ joining $V(P_{ij }\cap Leaf(T))$ to $V(Stem(T))$ except the edges $y_{i}x_{i}$ and $y_{j}x_{j}$. Then the resulting graph of $G$ includes a unique circle, which contains an edge $e_{2}$ of $Stem(T)$ incident with a branch vertex. By removing the edge $e_{2}$, we obtain a tree $T_{5}$ whose stem has at most $k$ branch vertices. If $P_{ij}$ contains a vertex of $V(G)-V(T)$, then the order of $T_{5}$ is greater than $T$, which contradicts the condition (C1). Otherwise, $|T_{5}|=|T|$ and $|Leaf(Stem(T_{5}))|=|Leaf(Stem(T))|-1$. This contradicts the condition (C2). Hence, $P_{ij}$ passes through a vertex $s\in Stem(T)-\left\{x_{i},x_{j}\right\}$. Then there exists a vertex $ t\in V(P_{ij})$ with $t\in V(Stem(T))-\left\{x_{i},x_{j}\right\}$. Hence, by Claim \ref{claim 3}, $d_{G}(y_{i},s)\geqslant 2$ and $d_{G}(s,y_{j})\geqslant 2$. Therefore, $d_{G}(y_{i},y_{j})=d_{G}(y_{i},s)+d_{G}(s,y_{j})\geqslant 4$ for $1\leqslant i<j\leqslant k$. 
\end{proof}
As a corollary of Claim \ref{claim 4}, we have the following claim.
\begin{claim}\label{claim 5}
 $N_{G}(v)\cap N_{G}(y_{i})=\emptyset$ and $N_{G}(y_{i})\cap N_{G}(y_{j})=\emptyset$ for $1\leqslant i\neq j\leqslant l$.
\end{claim}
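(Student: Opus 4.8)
The plan is to obtain both equalities as an immediate consequence of Claim \ref{claim 4} together with the triangle inequality for the graph distance $d_G$. The guiding observation is that any two vertices which share a common neighbor lie at distance at most two from each other, so the distance lower bound furnished by Claim \ref{claim 4} simply forbids the existence of a shared neighbor.

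First I would fix two distinct vertices $a, b \in \{v, y_{1}, y_{2}, \dots, y_{l}\}$ and argue by contradiction. Suppose $N_{G}(a) \cap N_{G}(b) \neq \emptyset$ and choose a vertex $w \in N_{G}(a) \cap N_{G}(b)$. Since $G$ is simple (no loops), $w$ is distinct from both $a$ and $b$ and is adjacent to each of them, so $d_{G}(a,w) = d_{G}(w,b) = 1$. The triangle inequality then gives $d_{G}(a,b) \leqslant d_{G}(a,w) + d_{G}(w,b) = 2$, which contradicts the inequality $d_{G}(a,b) \geqslant 4$ supplied by Claim \ref{claim 4}. Hence $N_{G}(a) \cap N_{G}(b) = \emptyset$.

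Specializing the pair $(a,b)$ to $(v, y_{i})$ yields $N_{G}(v) \cap N_{G}(y_{i}) = \emptyset$ for every $1 \leqslant i \leqslant l$, and specializing it to $(y_{i}, y_{j})$ with $1 \leqslant i \neq j \leqslant l$ yields $N_{G}(y_{i}) \cap N_{G}(y_{j}) = \emptyset$, as desired. Since all the distance information required has already been established in Claim \ref{claim 4}, there is no genuine obstacle here; the only point needing a moment's care is confirming that a common neighbor is truly a third vertex distinct from $a$ and $b$, which holds automatically because $G$ is loopless. This is precisely why the claim is presented as a corollary rather than an independent result.
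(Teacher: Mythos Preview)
Your argument is correct and matches the paper's approach exactly: the paper states Claim~\ref{claim 5} without proof, simply noting that it is a corollary of Claim~\ref{claim 4}, and your triangle-inequality derivation is precisely the intended one-line justification.
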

Denote $Y=\left\{y_{1},y_2,\dots,y_{l}\right\}$. Since Claim \ref{claim 1}-\ref{claim 5}, we have
\begin{center}
$N_{G}(v)\subseteq (V(G)-V(T)-\left\{v\right\})\cup (N_{G}(v)\cap(Leaf(T)-Y)),$\\
$\displaystyle\bigcup_{i=1}^{k+2}N_{G}(y_{i})\subseteq (Leaf(T)-Y-N_{G}(v))\cup\left\{x_{1},\dots,x_{k+2}\right\}$
\end{center}
Using Claim \ref{claim 6}, we have $|Stem(T)| \geq l + k + 3.$\\
Hence by setting $h=|N_{G}(v)\cap(Leaf(T)-Y)|$, we have
\begin{equation*}
\begin{split}
\deg_{G}(v)+\displaystyle\sum\limits_{i=1}^{k+2}\deg_{G}(y_{i})&\leqslant|G|-|T|-1+h+|Leaf(T)|-h-l+k+2\\
&=|G|-|Stem(T)|-l+k+1\\
&\leqslant |G|-2l-2\leqslant|G|-2k-6 \text{ (by $l \geq k+2$)}.
\end{split}
\end{equation*}
Which contradicts the condition in Theorem \ref{thm-main}.\\
Theorem \ref{thm-main} is proved.

Now, since the properties of the claw-free graph we have the following claim.
\begin{claim}\label{claim 4b}
	$d_{G}\left(v,y_i\right)\geqslant 5$ for all $1\leq i \leq l.$
\end{claim}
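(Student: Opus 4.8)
The plan is to argue by contradiction. Suppose $d_{G}(v,y_{i})=4$ for some $i$ and let $v=p_{0},p_{1},p_{2},p_{3},p_{4}=y_{i}$ be a shortest path connecting $v$ and $y_{i}$ in $G$. The argument proving $d_{G}(v,y_{i})\geqslant 4$ in Claim \ref{claim 4} exhibits a vertex of $Stem(T)-\{x_{i}\}$ on such a path lying at distance at least $2$ from each endpoint, so the equality $d_{G}(v,y_{i})=4$ forces that vertex to be $p_{2}$; write $s:=p_{2}\in Stem(T)-\{x_{i}\}$, so $d_{G}(v,s)=d_{G}(s,y_{i})=2$. By Claim \ref{claim 1}, $p_{1}\in N_{G}(v)\subseteq Leaf(T)\cup(V(G)-V(T))$, and by Claim \ref{claim 3}, $p_{3}\in N_{G}(y_{i})\subseteq Leaf(T)\cup\{x_{i}\}$. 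Since the path is shortest, $p_{1}\not\sim p_{3}$ and $v\not\sim s$.

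First I would pin down $p_{1}$. If $p_{1}\in V(G)-V(T)$, then applying Claim \ref{claim 1} to $p_{1}$ gives $N_{G}(p_{1})\subseteq Leaf(T)\cup(V(G)-V(T))$, contradicting $p_{1}\sim s\in Stem(T)$; hence $p_{1}\in Leaf(T)$. Let $w_{1}$ be the unique neighbour of $p_{1}$ in $T$, so that $w_{1}\in Stem(T)$. Now I would invoke claw-freeness at $p_{1}$: the vertices $v,s,w_{1}$ all lie in $N_{G}(p_{1})$, and $v\not\sim s$ while $v\not\sim w_{1}$ (the latter by Claim \ref{claim 1}, as $w_{1}\in Stem(T)$). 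Since $G$ is claw-free, $\{v,s,w_{1}\}$ cannot be independent, so either $s=w_{1}$ or $s\sim w_{1}$; that is, $s$ is the neighbour of $p_{1}$ in $T$ or is adjacent in $Stem(T)$ to that neighbour. This is the clean opening move that collapses the generic configuration.

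I would then treat the principal case $s=w_{1}$. Here $p_{1}$ is a leaf of $T$ attached to $s$ and adjacent to the outside vertex $v$, exactly as in Claim \ref{claim 1b}; forming $T+p_{1}v$ turns $p_{1}$ into an internal vertex and raises $\deg_{Stem(T)}(s)$ by one, so unless $\deg_{Stem(T)}(s)=2$ the resulting tree has order $|T|+1$ and a stem with $k$ branch vertices, contradicting (C1). Thus $s\in M$, with two neighbours $a,b$ in $Stem(T)$. Applying the claw-free condition at $s$ to $\{p_{1},p_{3},a\}$ and to $\{p_{1},p_{3},b\}$, and recalling $p_{1}\not\sim p_{3}$, each triple forces an edge from $a$ or $b$ to $p_{1}$ or to $p_{3}$. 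Such an edge provides a detour around $s$ that I can splice into $T$: re-attach $p_{1}$ along it and add the edge $p_{1}v$, keeping the pendant edge $y_{i}x_{i}$ intact (when $p_{3}=x_{i}$) or the subpath through $p_{3}$ intact otherwise. As in Claim \ref{claim 6}, this would yield a tree of larger order, contradicting (C1), or of the same order with fewer stem-leaves or a smaller stem, contradicting (C2) or (C3). The remaining case $s\sim w_{1}$, together with the sub-case $p_{3}=x_{i}$, I would dispose of by the same re-attachment surgeries.

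The step I expect to be the main obstacle is the branch-vertex bookkeeping in these surgeries. Each re-routing risks raising the stem-degree of some vertex from $2$ to $3$ and thereby creating a new branch vertex, which would destroy the contradiction; consequently the verification that every constructed tree has a stem with at most $k$ branch vertices must be carried out separately for each configuration, according to which forced edge at $a$ or $b$ is used and according to whether $p_{3}$ is the stem-leaf $x_{i}$ or an ordinary leaf of $T$. It is this case analysis, rather than any single idea, where the effort concentrates.
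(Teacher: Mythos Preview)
Your setup through $s\in M$ is correct and matches the paper: the equality $d_G(v,y_i)=4$ forces $s:=p_2\in Stem(T)\setminus\{x_i\}$ at distance $2$ from each end, and since $p_1\in Leaf(T)$ is adjacent to $v\in V(G)\setminus V(T)$, the argument of Claim~\ref{claim 1b} applied with $p_1$ in the role of $u$ gives $\deg_{Stem(T)}(s)=2$. Your preliminary claw-freeness at $p_1$ (yielding $s=w_1$ or $s\sim w_1$) is valid but superfluous; the paper simply re-attaches $p_1$ to $s$ when $w_1\neq s$ (delete $p_1w_1$, add $p_1s$), obtaining a tree $T_*$ with the same (C1)--(C3) parameters, which folds your two cases into one.

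The genuine gap is the surgery step. You apply claw-freeness at $s$ to $\{p_1,p_3,a\}$ and $\{p_1,p_3,b\}$, but an edge of the form $p_3a$ gives you no control over $\deg_{Stem}(a)$, and any re-attachment through $a$ then raises $a$'s stem-degree by one, potentially creating a $(k{+}1)$-st branch vertex; your case analysis has no mechanism to rule this out. The paper closes exactly this hole by a step you are missing: after forming $T_*$ it re-runs the arguments of Claims~\ref{claim 1b} and~\ref{claim 6} (with $p_1$ playing the role of $u$) to show that \emph{both} stem-neighbours $y,t$ of $s$ in $T_*$ satisfy $\deg_{Stem(T_*)}(y)=\deg_{Stem(T_*)}(t)=2$. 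Only then does it invoke claw-freeness at $s$, and on the triple $\{p_1,y,t\}$ rather than anything involving $p_3$; the forced edge ($p_1y$, $p_1t$, or $yt$) now permits an explicit rebuild $T_6$ in which the degree-$2$ information on $s,y,t$ guarantees that no new branch vertex is created while $|T_6|>|T|$. Without this degree-$2$ fact for the neighbours of $s$, the branch-vertex bookkeeping you correctly flag as the main obstacle does not close.
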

\begin{proof}
Since Claim \ref{claim 4}, we have $d_{G}(v,y_{i})\geqslant 4.$ Assume that $d_{G}(v,y_{i})= 4.$ Let $P_{i}$ be the shortest path connecting $v$ and $y_{i}$ in $G$. Then by the proof of Claim \ref{claim 4} there exists a vertex $s\in V(P_{i})$ with $s\in V(Stem(T))-\left\{x_{i}\right\}$. By Claim \ref{claim 1} and \ref{claim 3}, we have $d_{G}(v,s)\geqslant 2$ and $d_{G}(s,y_{i})\geqslant 2.$ Therefore, if $d_{G}(v,y_{i})=d_{G}(v,s)+d_{G}(s,y_{i})= 4$ then $d_{G}(v,s)=d_{G}(s,y_{i})= 2$ and, moreover, $s$ must be in $M$ by the proof of Claim \ref{claim 1b}. Let $vu's$ and $y_iz_is$ be two paths in $P_i.$ So $u' \in Leaf(T)\cup (V(G)-V(T)).$\\	
	If $u' \in V(G) - V(T)$ then we consider a tree $T_*= T+su'$ then $Stem(T_*)$ has $k$ branch vertices and $|T_*| > |T|.$ This contradicts (C1). \\	
	If $u' \in Leaf(T),$ remove the adge of $T$ joining $u'$ and add $u's$ to $T.$ Then resulting tree $T_*$ of $G$ whose stem has $k$ branch vertices $,|T_*| =|T|, |Leaf(Stem(T_*))|=|Leaf(Stem(T))|$ and $|Stem(T_*)|=|Stem(T)|.$ Using the same arguments in the proofs of Claim \ref{claim 1b} and \ref{claim 6} we can show that $\deg_{stem(T_*)}(s)= 2$ and if $s$ is adjacent with two vertices $y, t$ in $Stem(T_*)$ then $\deg_{stem(T_*)}(y)= \deg_{stem(T_*)}(t)= 2.$ Now, by definition of the claw-free graph $G$, then either $u't$ or $u'y$ or $ty$ is an edge in $G.$ Let $p$ be a vertex of $Stem(T)$ such that $z_ip$ is in $E(T).$  We consider a new tree 
	\begin{align*}
		T_6 = 
		\begin{cases}
			T_*-ys +u'y+u'v & \text { if $u'y \in E(G)$},\\
			T_*-ts +u't +u'v& \text { if $u't \in E(G)$},\\
			T_*-ts-ys-z_ip +ty+sz_i+z_iy_i+su'+u'v & \text { if $ty \in E(G)$,}
		\end{cases}
	\end{align*}
	Then  resulting tree $T_6$ of $G$ is a tree whose stem has $k$ branch vertices and the order of $T_6$ is greater than $|T_*|=|T|$, which contradicts the condition (C1). So $d_{G}(v,y_{i})\geq 5.$
	\end{proof}
Fix an index $i$. Since Claim \ref{claim 1}-\ref{claim 4b}, we have
\begin{center}
	$N_{G}(v)\subseteq (V(G)-V(T)-\left\{v\right\})\cup (N_{G}(v)\cap(Leaf(T)-Y)),$\\
	$N_{G}(y_{i})\subseteq (Leaf(T)-Y-N_{G}(v))\cup\{x_{i}\}$
\end{center}
Using Claim \ref{claim 6}, we have $|Stem(T)| \geq l + k + 3.$\\
Hence, we have
\begin{equation*}
	\begin{split}
		\deg_{G}(v)+\deg_{G}(y_{i})&\leqslant|G|-|T|-1+|Leaf(T)|-l+1\\
		&=|G|-|Stem(T)|-l\\
		&\leqslant |G|-2l-k-3\leqslant|G|-3k-7 \text{ (by $l \geq k+2$)}.
	\end{split}
\end{equation*}
Which contradicts the condition in Theorem \ref{thm-main2}.\\
Theorem \ref{thm-main2} is proved.

\end{document}